\theoremstyle{plain}
\newtheorem{theorem}{Theorem}[section]
\newtheorem{lemma}[theorem]{Lemma}
\newtheorem{corollary}[theorem]{Corollary}
\theoremstyle{definition}
\newtheorem{example}[theorem]{Example}
\newcommand{\C}{\mathbb{C}}
\newcommand{\R}{\mathbb{R}}
\newcommand{\cK}{\mathcal{K}}
\newcommand{\cL}{\mathcal{L}}
\newcommand{\cO}{\mathcal{O}}
\newcommand{\abs}[1]{\vert #1 \vert}
\DeclareMathOperator{\cn}{cn}
\DeclareMathOperator{\dn}{dn}
\DeclareMathOperator{\sn}{sn}
\DeclareMathOperator{\exterior}{ext}
\newcommand{\coloneq}{\mathrel{\mathop:}=}
\newcommand{\eqcolon}{=\mathrel{\mathop:}}
\newcommand{\eop}{}
\author{Olivier S\`{e}te\footnotemark[1] \and J\"{o}rg Liesen\footnotemark[1]}
\title{On conformal maps from multiply connected domains onto lemniscatic 
domains}
\begin{document}
\maketitle

\renewcommand{\thefootnote}{\fnsymbol{footnote}}

\footnotetext[1]{Technische Universit\"{a}t Berlin, Institute of Mathematics, 
MA~4-5, Stra{\ss}e des 
17. Juni 136, 10623 Berlin, Germany.
\texttt{\{sete,liesen\}@math.tu-berlin.de}}

\renewcommand{\thefootnote}{\arabic{footnote}}

\begin{abstract}
%% abstract
We study conformal maps from multiply connected domains in the extended complex 
plane onto lemniscatic domains.  Walsh proved the existence of such maps in 
1956 and thus obtained a direct generalization of the Riemann mapping theorem 
to multiply connected domains.
For certain polynomial pre-images of simply connected sets we derive a 
construction principle for Walsh's conformal map in terms of the Riemann map 
for the simply connected set.
Moreover, we explicitly construct examples of Walsh's conformal map for certain 
radial slit domains and circular domains.

\end{abstract}

\textbf{Keywords} conformal mapping; multiply connected domains;
lemniscatic domains.

\textbf{Mathematics Subject Classification (2010)} 30C35; 30C20

%% Section -- Introduction

\section{Introduction}

Let $\cK$ be any simply connected domain (open and connected set) in the 
extended complex plane $\widehat{\C}$ with $\infty \in \cK$ and with at least 
two boundary points.  Then the Riemann mapping theorem guarantees the existence 
of a conformal map $\Phi$ from $\cK$ onto the exterior of the unit disk, which 
is uniquely determined by the normalization conditions $\Phi(\infty)=\infty$ 
and $\Phi'(\infty) > 0$.
The exterior of the unit disk therefore is considered \emph{the canonical
domain} every such domain $\cK$ can be conformally identified with (in the
Riemann sense).  For domains $\cK$ that are not simply connected the conformal 
identification with a suitable canonical domain is significantly more challenging.
This fact has been well described already by Nehari in his classical monograph on
conformal mappings from 1952~\cite[Chapter~7]{Nehari1952}, which identified
five of the ``more important'' canonical slit domains (originally due to
Koebe~\cite[p.~311]{Koebe1916}). 

In recent years there has been a surge of
interest in the theory and computation of conformal maps for multiply connected
sets, which has been driven by the wealth of applications of conformal mapping
techniques throughout the mathematical sciences. Many recent publications have
dealt with canonical slit domains as those described by Nehari; see,
e.g.,~\cite{AndreevMcnicholl2012,CrowdyMarshall2006,DelilloDriscollElcratPfaltzgraff2008,DelilloKropf2011,Nasser2011,Nasser2013}.
A related line of recent research in this context has focussed on the theory
and computation of Schwarz-Christoffel mapping formulas from (the exterior of)
finitely many non-intersecting disks (circular domains, see,
e.g.,~\cite{Henrici1986}) onto (the exterior of) the same number
of non-intersecting polygons; see,
e.g.,~\cite{Crowdy2005,Crowdy2007,Delillo2006,DelilloDriscollElcratPfaltzgraff2006,DelilloElcratPfaltzgraff2004}.
A review and comparison of both approaches is given in~\cite{DelilloKropf2009}.

In this work we explore yet another idea which goes back to a paper
of Walsh from 1956~\cite{Walsh1956}. Walsh's canonical domain is a
\emph{lemniscatic domain} of the form
\begin{equation}\label{eqn:lemniscate}
\cL \coloneq \{ w \in \widehat{\C} : \abs{U(w)} > \mu \}, \quad \text{where} 
\quad U(w) \coloneq \prod_{j=1}^n (w-a_j)^{m_j},
\end{equation}
$a_1, \ldots, a_n\in\C$ are pairwise distinct,
$m_1, \ldots, m_n > 0$ satisfy $\sum_{j=1}^n m_j = 1$, and $\mu > 0$.
Note that the function $U$ in the definition of $\cL$ is an analytic but in
general multiple-valued function. Its absolute value is, however,
single-valued.
Walsh proved that if $\cK$ is the exterior of $n \geq 1$ non-intersecting 
simply connected components, then $\cK$ can be conformally identified
with \emph{some} lemniscatic domain $\cL$ of the form \eqref{eqn:lemniscate};
see Theorem~\ref{thm:existence_ext_mapping} below for the complete statement.
Walsh's theorem is a direct generalization of the Riemann mapping theorem,
and for $n=1$ the two results are in fact equivalent. Alternative
proofs of Walsh's theorem were given by Grunsky~\cite{Grunsky1957,Grunsky1957a}
(see also~\cite[Theorem~3.8.3]{Grunsky1978}), Jenkins~\cite{Jenkins1958} and
Landau~\cite{Landau1961}. For some further remarks on Walsh's theorem
we refer to Gaier's commentary in Walsh's Selected
Papers~\cite[pp.~374-377]{Walsh2000}.

To our knowledge, apart from the different \emph{existence proofs},
conformal maps related to Walsh's lemniscatic domains, which we call 
\emph{lemniscatic maps}, have rarely been studied. In particular, we 
are not aware of any example for lemniscatic maps in the previously 
published literature. In this work we derive a general construction
principle for lemniscatic maps for polynomial pre-images of simply
connected sets and we construct some explicit examples.
We believe that our results are of interest not only from a theoretical 
but also from a practical point of view. Walsh's lemniscatic 
map easily reveals the logarithmic capacity of $E = \widehat{\C} \backslash 
\cK$ 
as well as the Green's function with pole at infinity for $\cK$, whose contour 
lines or level curves are important in polynomial approximation.
Moreover, analogously to the construction of the 
classical Faber polynomials on compact and simply connected sets
(cf.~\cite{Curtiss1971,Suetin1998}), lemniscatic maps allow to define
generalized Faber polynomials on compact sets with several components;
see~\cite{Walsh1958}. While the classical Faber polynomials
have found a wide range of applications in particular in 
numerical linear
algebra (see, e.g.,~\cite{BeckermannReichel2009,HeuvelineSadkane1997,MoretNovati2001,MoretNovati2001a,StarkeVarga1993}) and
more general numerical polynomial approximation (see,
e.g.,~\cite{Ellacott1983,Ellacott1986}), the \emph{Faber--Walsh polynomials}
have not been used for similar purposes yet,
as no explicit examples for lemniscatic maps have been known. 
In our follow-up paper~\cite{SetLie15} we present more details on the theory 
of Faber--Walsh polynomials as well as explicitly computed examples.

%%%

In Section~\ref{sect:properties} we state Walsh's theorem, and discuss general 
properties of the conformal map onto lemniscatic domains.
We then consider the explicit construction of lemniscatic maps:
In Section~\ref{sect:pre-images} we derive a construction principle for the 
lemniscatic map for certain polynomial pre-images of simply connected compact 
sets. In Section~\ref{sect:map_disks} we construct the lemniscatic map for the 
exterior of two equal disks. Some brief concluding remarks in Section~\ref{sect:concl} 
close the paper.

%% End of Introduction 
%% Section -- Properties of lemniscatic conformal maps

\section{General properties of the conformal map onto lemniscatic domains}
\label{sect:properties}

Let us first consider a lemniscatic domain $\cL$ as in~\eqref{eqn:lemniscate}. 
It is easy to see that its Green's function with pole at infinity is given by
\begin{equation*} % \label{eqn:green_cL}
g_\cL(w) = \log \abs{U(w)} - \log(\mu).
\end{equation*}
Moreover,
\begin{equation*} % \label{eqn:capL}
c(\widehat{\C} \backslash \cL) \coloneq \lim_{w\rightarrow\infty}\, \exp(\log 
\abs{w} - g_\cL(w)) = \mu
\end{equation*}
is the logarithmic capacity of $\widehat{\C} \backslash \cL$. The following 
theorem on the conformal equivalence of lemniscatic domains and certain 
multiply connected domains is due to Walsh~\cite[Theorems~3 and~4]{Walsh1956}.

\begin{theorem} \label{thm:existence_ext_mapping}
Let $E \coloneq \cup_{j=1}^n E_j$, where $E_1, \ldots, E_n \subseteq \C$ are
mutually exterior simply connected compact sets (none a single point) and
let $\cK \coloneq \widehat{\C} \backslash E$.
Then there exist a unique lemniscatic domain $\cL$ of the 
form~\eqref{eqn:lemniscate} and a unique bijective conformal map
\begin{equation}\label{eqn:normalization_Phi}
\Phi : \cK \to \cL\quad\mbox{with}\quad
\Phi(z) = z + \cO \left( \frac{1}{z} \right) \quad \text{for $z$ near infinity}.
\end{equation}
In particular,
\begin{equation}\label{eqn:green_cK}
g_\cK(z) = g_\cL(\Phi(z)) = \log \abs{U(\Phi(z))} - \log(\mu)
\end{equation}
is the Green's function with pole at infinity of $\cK$, and the logarithmic 
capacity of $E$ is
$c(E)=c(\widehat{\C} \backslash \cL)=\mu$.
The function $\Phi$ is called the \emph{lemniscatic map} of $\cK$ (or of $E$).
\end{theorem}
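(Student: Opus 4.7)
The plan is to first derive necessary conditions that force the parameters $\mu$ and $m_j$, then to construct $\Phi$ from the analytic completion of the Green's function of $\cK$, with the positions $a_j$ determined last. For the first step, assume such a $\Phi$ exists; then by conformal invariance of the Green's function, $g_\cK = g_\cL \circ \Phi = \log \abs{U(\Phi(\cdot))} - \log \mu$. Matching the asymptotics at $\infty$ (where $g_\cK(z) = \log\abs{z} - \log c(E) + o(1)$, and $U(\Phi(z)) \sim \Phi(z) \sim z$ thanks to $\sum_j m_j = 1$ and the normalization~\eqref{eqn:normalization_Phi}) forces $\mu = c(E)$. Integrating $d\arg(U \circ \Phi)$ around a loop $\gamma_j$ in $\cK$ that encloses only $E_j$ then identifies $m_j$ with the harmonic measure $\omega_j$ of $\partial E_j$ at infinity in $\cK$. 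Uniqueness of $\Phi$ and of the parameters follows from comparing two candidates via $\Phi_1 \circ \Phi_2^{-1}$, a conformal self-map of a lemniscatic domain tangent to the identity at $\infty$, which must therefore be the identity.

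For existence I would consider the multi-valued analytic function on $\cK$
\[
F(z) \coloneq c(E)\,\exp\bigl(g_\cK(z) + i\,\widetilde{g}_\cK(z)\bigr),
\]
where $\widetilde{g}_\cK$ is the harmonic conjugate of $g_\cK$. Its modulus is single-valued with $\abs{F(z)} = c(E) e^{g_\cK(z)}$, it satisfies $F(z) = z + \mathcal{O}(1)$ near infinity, and it has multiplicative monodromy $e^{2\pi i \omega_j}$ around $\partial E_j$. The construction of $\Phi$ then amounts to finding $n$ distinct points $a_1, \ldots, a_n \in \C$ such that, with $m_j \coloneq \omega_j$ and $U(w) = \prod_{j=1}^n (w-a_j)^{m_j}$, the equation $U(\Phi(z)) = F(z)$ admits a single-valued conformal solution $\Phi \colon \cK \to \cL$, where $\cL \coloneq \{\abs{U} > c(E)\}$. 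Because the multiplicative monodromy of $U$ around $a_j$ is exactly $e^{2\pi i \omega_j}$, matching that of $F$ around $\partial E_j$, the correct choice of the $a_j$ makes the local branches of $U^{-1} \circ F$ glue into a globally single-valued $\Phi$.

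The hard part will be producing the $a_j$ and verifying that the resulting $\Phi$ is injective with image exactly $\cL$. A natural attempt is a degree / fixed-point argument: parametrize candidate tuples $(a_1, \ldots, a_n) \in \C^n$, show that the map sending such a tuple to the ``centers'' of the $n$ sheets of $U^{-1}(F)$ has a fixed point (by Brouwer, or by deformation from a tractable case such as $n = 1$ or $E_j$'s concentrated near points). Once $\Phi$ is constructed, injectivity and surjectivity should follow from the argument principle, using that $\abs{F(z)} \to c(E)$ as $z \to \partial \cK$ and the simple-pole behaviour at $\infty$. The identity~\eqref{eqn:green_cK} and the equality $c(E) = \mu$ then drop out of $\abs{U(\Phi(z))} = \abs{F(z)} = c(E) e^{g_\cK(z)}$ together with the asymptotics at $\infty$. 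I suspect a cleaner alternative, closer to the variational proofs of Grunsky and Landau mentioned in the introduction, would obtain $\Phi$ directly as the extremal map for a capacity-type functional, bypassing the explicit determination of the $a_j$ altogether.
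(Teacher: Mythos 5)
You should first be aware that the paper does not prove Theorem~\ref{thm:existence_ext_mapping} at all: it is quoted from Walsh~\cite[Theorems~3 and~4]{Walsh1956}, with alternative proofs attributed to Grunsky, Jenkins and Landau. So there is no in-paper argument to compare yours against; your attempt has to stand on its own as a proof of Walsh's theorem, and as written it does not.

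Your preliminary analysis is sound: conformal invariance of the Green's function forces $\mu = c(E)$, the period of $\arg(U\circ\Phi)$ around a loop separating $E_j$ from the rest forces $m_j = \omega_j$ (the harmonic measure of $\partial E_j$ at infinity), and $F = c(E)\exp(g_\cK + i\widetilde{g}_\cK)$ is the right object to study. But the entire mathematical content of the theorem sits in the step you defer: producing points $a_1,\ldots,a_n$ for which $U^{-1}\circ F$ has a single-valued, injective branch onto $\{\abs{U} > c(E)\}$. Matching the multiplicative monodromy of $U$ around each $a_j$ with that of $F$ around each $E_j$ is necessary but far from sufficient: $U^{-1}$ lives on an $n$-sheeted cover branched over the critical values of $U$, and whether local branches of $U^{-1}\circ F$ glue globally depends on the full monodromy representation on $\pi_1(\cK)$ and on the location of the critical points of $g_\cK$ (equivalently, on how the level sets of $g_\cK$ merge as the level increases), none of which is controlled by the choice of the $m_j$ alone. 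The proposed Brouwer/continuity argument for the $a_j$ is exactly where Walsh, Grunsky and Landau each had to work hard; you define neither the map whose fixed point you seek, nor why the ``centers of the sheets'' are well defined before $\Phi$ exists, nor the compactness needed to run a degree or deformation argument. The uniqueness step has the same character: the assertion that a conformal map between two lemniscatic domains normalized by $w + \cO(1/w)$ at infinity must be the identity is itself Walsh's Theorem~4 (note also that $\Phi_1\circ\Phi_2^{-1}$ maps $\cL_2$ onto $\cL_1$, which are not known to coincide a priori, so it is not a self-map). In short, the skeleton is the right one, but the proof is missing precisely at its load-bearing joints.
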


Note that for $n=1$ the lemniscatic domain $\cL$ is the exterior of
a disk with radius $\mu>0$, and Theorem~\ref{thm:existence_ext_mapping} 
is equivalent to the classical Riemann mapping theorem.

If, for the given set $\cK$, the function 
$\widetilde{\Phi} : \cK \to \widetilde{\cL}$ is any conformal map onto 
a lemniscatic domain that is normalized by $\widetilde{\Phi}(\infty) = \infty$ 
and $\widetilde{\Phi}'(\infty) = 1$, then $\widetilde{\Phi}(z) = \Phi(z) + b$ 
with $\Phi$ from Theorem~\ref{thm:existence_ext_mapping} and some $b \in \C$.
This uniqueness up to translation of lemniscatic domains follows from a more 
general theorem of Walsh~\cite[Theorem~4]{Walsh1956} by taking into 
account the normalization of $\widetilde{\Phi}$. This fact has already been 
noted by Motzkin in his MathSciNet review of~\cite{Walsh1958}.

Let $\sigma > 1$, and let 
\begin{equation*}
\Gamma_\sigma = \{ z \in \cK : g_\cK(z) = \log(\sigma) \}\quad\mbox{and}
\quad
\Lambda_\sigma = \{ w \in \cL : g_\cL(w) = \log(\sigma) \}
% \label{eqn:green_levelsets}
\end{equation*}
be the level curves of $g_\cK$ and $g_\cL$, respectively.
Then~\eqref{eqn:green_cK} implies $\Phi(\Gamma_\sigma) = \Lambda_\sigma$, 
and thus
\begin{equation*}
\Phi : \exterior ( \Gamma_\sigma ) \to \exterior ( \Lambda_\sigma ) = \{ w \in 
\widehat{\C} : \abs{U(w)} > \sigma \mu \}
\end{equation*}
is the lemniscatic map of the exterior of $\Gamma_\sigma$, provided that 
$\Gamma_\sigma$ still has $n$ components.  (This holds exactly when the 
zeros of $g_\cK'$ lie exterior to $\Gamma_\sigma$.) 
Thus, we may ``thicken'' the given set $E = \widehat{\C} \backslash \cK$, 
and $\Phi$ still is the corresponding lemniscatic map.
An illustration is given in Figure~\ref{fig:thick_E} for a compact set
composed of three radial slits from Corollary~\ref{cor:radial_slits} below
(with parameters $n=3$, $C = 1$ and $D = 2$) and for $\Gamma_\sigma$ with
$\sigma = 1.15$.

\begin{figure}[t]
\begin{center}
\includegraphics[width=0.48\textwidth]{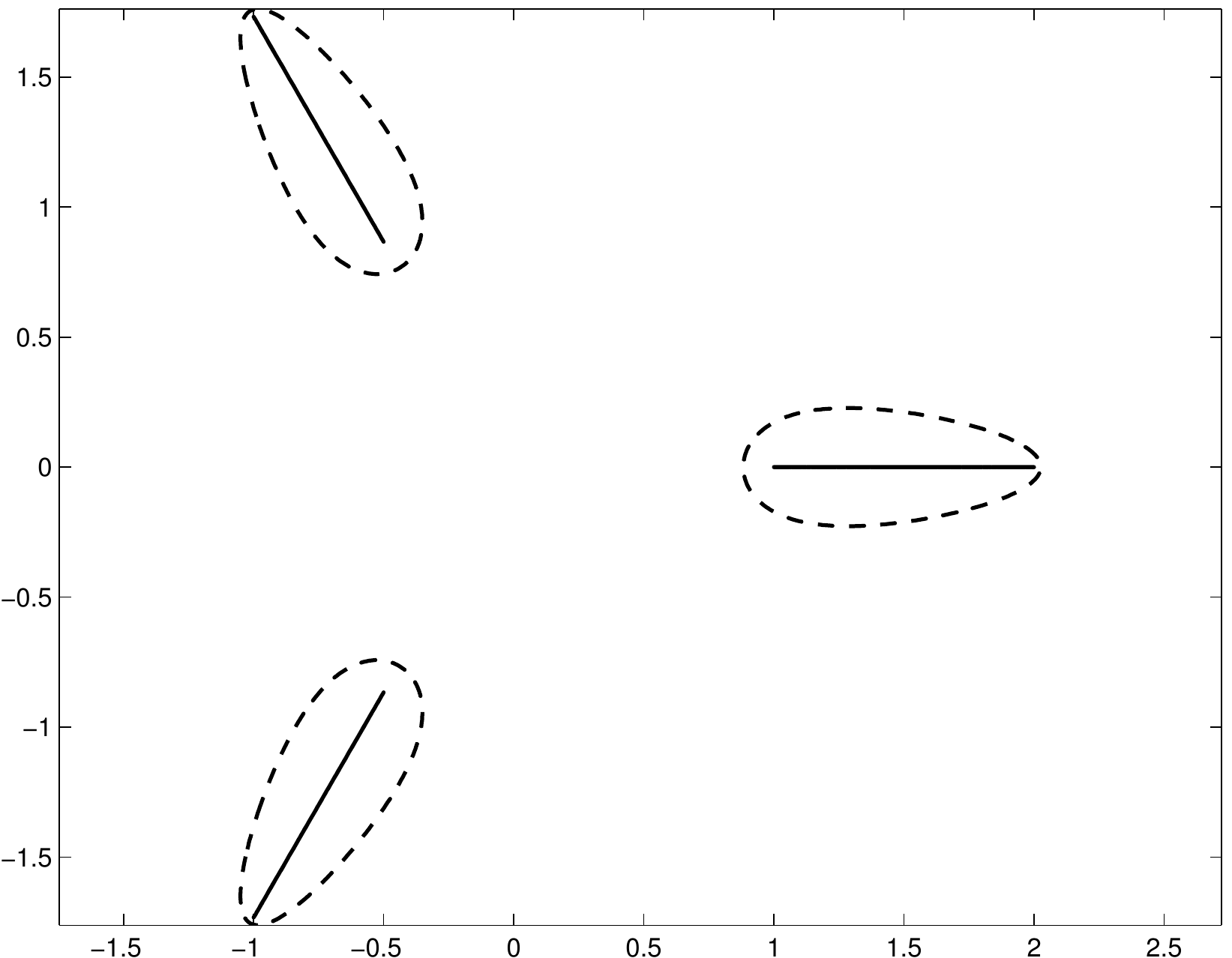}
\includegraphics[width=0.48\textwidth]{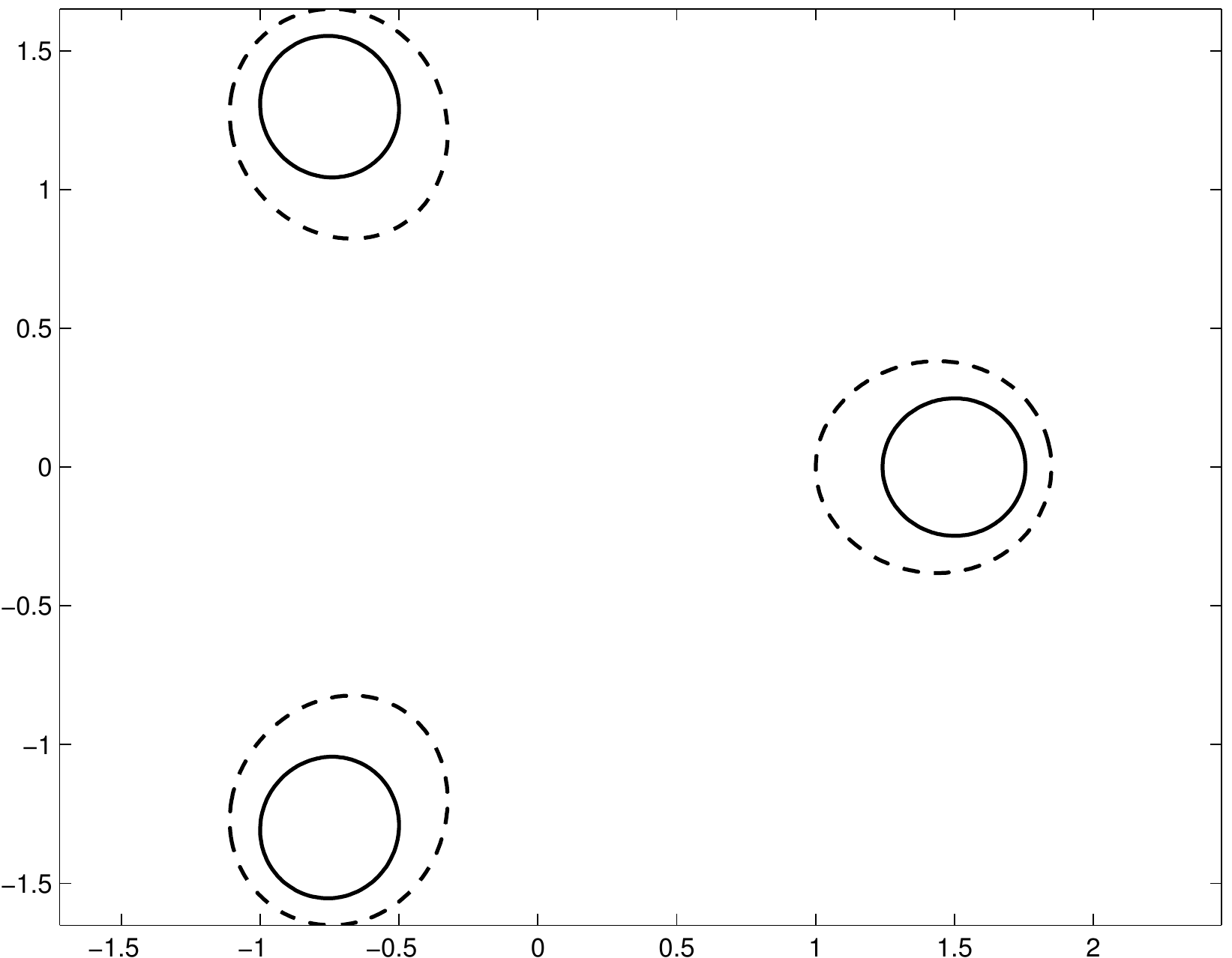}
\end{center}
\caption{Left: The set $E$ consisting of three radial slits (solid) and the 
``thickened'' set bounded by $\Gamma_\sigma$ for $\sigma=1.15$ (dashed).
Right: The corresponding lemniscatic domains.}
\label{fig:thick_E}
\end{figure}

The next result shows that certain symmetry properties of the domain $\cK$
imply corresponding properties of its lemniscatic map $\Phi$ and the 
lemniscatic domain $\cL$.  
Here we consider rotational symmetry as well as symmetry with respect to the 
real and the imaginary axis.

\begin{lemma} \label{lem:E_symm}
In the notation of Theorem~\ref{thm:existence_ext_mapping} we have:
\begin{enumerate}
\item If $\cK = e^{i \theta} \cK \coloneq \{ e^{i \theta} z : z \in \cK \}$, 
then $\Phi(z) = e^{- i \theta} \Phi( e^{i \theta} z )$ and $\cL = e^{i \theta} 
\cL$.

\item If $\cK = \cK^\ast \coloneq \{ \overline{z} : z \in \cK \}$, then 
$\Phi(z)= \overline{ \Phi(\overline{z}) }$ and $\cL = \cL^*$.

\item If $\cK = - \cK^*$, then $\Phi(z) = - \overline{ \Phi(-\overline{z}) }$ 
and $\cL = - \cL^*$.
\end{enumerate}
In each case $\Phi^{-1}$ has the same symmetry property as $\Phi$.
\end{lemma}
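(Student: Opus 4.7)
The plan is to exploit the uniqueness part of Theorem~\ref{thm:existence_ext_mapping}: in each case I construct from $\Phi$ a candidate map $\widetilde{\Phi}$, verify that it is a conformal bijection from $\cK$ onto a set of the form~\eqref{eqn:lemniscate} with the correct normalization at infinity, and then conclude by uniqueness that $\widetilde{\Phi}=\Phi$ and that the image equals $\cL$.

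For part~(1), set $\widetilde{\Phi}(z) \coloneq e^{-i\theta}\Phi(e^{i\theta}z)$. Because $\cK=e^{i\theta}\cK$, this is a well-defined conformal bijection from $\cK$ onto $e^{-i\theta}\cL$. The normalization follows from $\Phi(w)=w+\cO(1/w)$: indeed $\widetilde{\Phi}(z)=e^{-i\theta}(e^{i\theta}z+\cO(1/z))=z+\cO(1/z)$. The step that needs a small computation is showing that $e^{-i\theta}\cL$ has the form~\eqref{eqn:lemniscate}; using $\sum m_j=1$, one gets $|U(e^{i\theta}w)|=\prod_{j=1}^{n}|w-e^{-i\theta}a_j|^{m_j}$, so that $e^{-i\theta}\cL=\{w:|\widetilde{U}(w)|>\mu\}$ with $\widetilde{U}(w)=\prod_{j=1}^{n}(w-e^{-i\theta}a_j)^{m_j}$. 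Uniqueness in Theorem~\ref{thm:existence_ext_mapping} then forces $e^{-i\theta}\cL=\cL$ and $\widetilde{\Phi}=\Phi$.

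For parts~(2) and~(3) the argument is the same, with $\widetilde{\Phi}(z)\coloneq\overline{\Phi(\overline{z})}$ and $\widetilde{\Phi}(z)\coloneq-\overline{\Phi(-\overline{z})}$, respectively. In both cases the hypothesis on $\cK$ guarantees that $\widetilde{\Phi}$ is defined on all of $\cK$; the conjugation makes the composition holomorphic again; and the normalization $\widetilde{\Phi}(z)=z+\cO(1/z)$ is immediate. The identities $|U(\overline{w})|=\prod_j|w-\overline{a_j}|^{m_j}$ and $|U(-\overline{w})|=\prod_j|w-(-\overline{a_j})|^{m_j}$ show that $\cL^{*}$ and $-\cL^{*}$ are lemniscatic domains of the form~\eqref{eqn:lemniscate} (with the new centres $\overline{a_j}$ and $-\overline{a_j}$, same exponents $m_j$, same $\mu$). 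Uniqueness then yields $\cL=\cL^{*}$, resp.\ $\cL=-\cL^{*}$, and the stated functional equations for $\Phi$.

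The statement about $\Phi^{-1}$ follows by substitution: e.g.\ in~(1), write $w=\Phi(z)$, then $z=\Phi^{-1}(w)$, and the relation $\Phi(z)=e^{-i\theta}\Phi(e^{i\theta}z)$ rearranges to $\Phi^{-1}(w)=e^{-i\theta}\Phi^{-1}(e^{i\theta}w)$; cases~(2) and~(3) are analogous. The only step that is at all delicate is the bookkeeping that shows the transformed lemniscatic domain is still of the form~\eqref{eqn:lemniscate}, and this is where the condition $\sum_{j=1}^{n}m_j=1$ is essential, since otherwise extra unimodular factors would appear in front of the product defining $\widetilde{U}$.
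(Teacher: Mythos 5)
Your proposal is correct and follows essentially the same route as the paper: construct the candidate $\widetilde{\Phi}$, check it is a normalized conformal bijection onto a lemniscatic domain, and invoke the uniqueness in Theorem~\ref{thm:existence_ext_mapping}; the paper writes out only part~(1) and the inverse computation, leaving (2) and (3) as ``similar.'' One small quibble: your closing remark overstates the role of $\sum_{j=1}^n m_j = 1$ here, since for a rotation or conjugation the extra factor is unimodular and hence invisible in $\abs{\widetilde{U}}$ regardless of the exponents --- that normalization only becomes essential for genuine scalings, as in Lemma~\ref{lem:linear_trafo}.
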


\begin{proof}
We only prove the first assertion; the proofs of the others are similar. 
Define the function $\widetilde{\Phi}$ on $\cK$
by $\widetilde{\Phi}(z) \coloneq e^{-i \theta} \Phi( e^{i \theta} z)$.  Then
\begin{equation*}
\widetilde{\Phi}(\cK) = e^{-i \theta} \Phi(e^{i \theta} \cK)
= e^{-i \theta} \Phi(\cK) = e^{-i \theta} \cL,
\end{equation*}
and $\widetilde{\Phi} : \cK \to e^{-i \theta} \cL$ is a bijective conformal map 
onto a 
lemniscatic domain with a normalization as in~\eqref{eqn:normalization_Phi}. 
Since the lemniscatic map of $\cK$ is unique, we have 
$\Phi(z) = \widetilde{\Phi}(z) = e^{-i \theta} \Phi(e^{i \theta} z)$ and $\cL = 
e^{-i \theta} \cL$, or equivalently $\cL = e^{i \theta} \cL$.

Suppose that $\Phi(z)= e^{-i \theta} \Phi(e^{i \theta} z)$ for all $z \in \cK$. 
Writing $w=\Phi(z)$ we get
\begin{equation*}
\Phi^{-1}(e^{i \theta} w) =\Phi^{-1}( e^{i \theta} \Phi(z))
=\Phi^{-1}(\Phi(e^{i \theta} z))= e^{i \theta} z= e^{i \theta} \Phi^{-1}(w),
\end{equation*}
which completes the proof.
\eop
\end{proof}

Finally, we show how a linear transformation of the set affects the 
lemniscatic map.

\begin{lemma} \label{lem:linear_trafo}
In the notation of Theorem~\ref{thm:existence_ext_mapping}, consider a
linear transformation $\tau(w) = a w + b$ with $a \neq 0$,  then
\begin{equation*}
\tau(\cL) = \Big\{ \widetilde{w} \in \widehat{\C} : \prod_{j=1}^n 
\abs{\widetilde{w} - \tau(a_j)}^{m_j} > \abs{a} \mu \Big\}
\end{equation*}
is a lemniscatic domain and $\widetilde{\Phi} \coloneq \tau \circ \Phi \circ
\tau^{-1}$ is the lemniscatic map of $\tau(\cK)$.
\end{lemma}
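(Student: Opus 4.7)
The plan is to verify both claims by direct substitution, and then invoke the uniqueness statement of Theorem~\ref{thm:existence_ext_mapping} applied to the transformed set $\tau(\cK)$.

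First, I would describe $\tau(\cL)$ explicitly. Substituting $\widetilde{w} = \tau(w) = aw + b$ so that $w = (\widetilde{w}-b)/a$, each factor in $U(w)$ transforms as
\begin{equation*}
w - a_j = \frac{\widetilde{w} - b}{a} - a_j = \frac{\widetilde{w} - \tau(a_j)}{a}.
\end{equation*}
Taking absolute values and using $\sum_{j=1}^n m_j = 1$ gives
\begin{equation*}
\abs{U(w)} = \prod_{j=1}^n \abs{w - a_j}^{m_j} = \abs{a}^{-1} \prod_{j=1}^n \abs{\widetilde{w} - \tau(a_j)}^{m_j},
\end{equation*}
so $|U(w)| > \mu$ is equivalent to $\prod_j |\widetilde{w} - \tau(a_j)|^{m_j} > |a|\mu$. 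Since the $\tau(a_j)$ are pairwise distinct, the same exponents $m_j$ still sum to $1$, and $|a|\mu > 0$, this exhibits $\tau(\cL)$ as a lemniscatic domain of the form~\eqref{eqn:lemniscate}.

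Second, I would check that $\widetilde{\Phi} \coloneq \tau \circ \Phi \circ \tau^{-1}$ is the lemniscatic map of $\tau(\cK)$. As a composition of bijective conformal maps, $\widetilde{\Phi}$ is a bijective conformal map from $\tau(\cK)$ onto $\tau(\cL)$, and the latter is lemniscatic by the previous step. It remains to verify the normalization~\eqref{eqn:normalization_Phi}. For $\widetilde{z}$ near infinity the point $\tau^{-1}(\widetilde{z}) = \widetilde{z}/a - b/a$ is also near infinity, so by Theorem~\ref{thm:existence_ext_mapping}
\begin{equation*}
\Phi(\tau^{-1}(\widetilde{z})) = \tau^{-1}(\widetilde{z}) + \cO\!\left( \frac{1}{\widetilde{z}} \right),
\end{equation*}
and applying $\tau$ to both sides yields $\widetilde{\Phi}(\widetilde{z}) = \widetilde{z} + \cO(1/\widetilde{z})$. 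The uniqueness clause of Theorem~\ref{thm:existence_ext_mapping}, applied to $\tau(\cK)$, then forces $\widetilde{\Phi}$ to be the lemniscatic map of $\tau(\cK)$.

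There is no real obstacle in this argument; the only point requiring slight care is that $U$ is multi-valued, but since the description of $\cL$ and of $\tau(\cL)$ only involves $|U|$, which is single-valued, the condition $\sum_j m_j = 1$ cleanly produces the factor $|a|^{-1}$ on the right-hand side. Everything else is a direct application of the existence and uniqueness part of Theorem~\ref{thm:existence_ext_mapping}.
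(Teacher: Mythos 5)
Your proposal is correct and follows essentially the same route as the paper: the same direct substitution $w - a_j = (\widetilde{w}-\tau(a_j))/a$ together with $\sum_j m_j = 1$ to identify $\tau(\cL)$ as a lemniscatic domain, and the same verification of the normalization~\eqref{eqn:normalization_Phi} for $\tau \circ \Phi \circ \tau^{-1}$ followed by uniqueness. The only difference is that you spell out a few routine points (distinctness of the $\tau(a_j)$, single-valuedness of $\abs{U}$) that the paper leaves implicit.
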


\begin{proof}
With $\widetilde{w} = \tau(w) = a w + b$ we have
\begin{equation*}
\prod_{j=1}^n \abs{\widetilde{w} - \tau(a_j)}^{m_j}
= \prod_{j=1}^n \abs{a w - a a_j}^{m_j}
= \abs{a} \prod_{j=1}^n \abs{w-a_j}^{m_j},
\end{equation*}
and hence $\tau(\cL)$ is a lemniscatic domain.  Clearly, $\widetilde{\Phi} :
\tau(\cK) \to \tau(\cL)$ is a bijective and conformal map with Laurent series 
at infinity
\begin{equation*}
\widetilde{\Phi}(z) = a \Phi \Big(\frac{z-b}{a}\Big) + b
= z + \cO \left(\frac{1}{z}\right).
\end{equation*}
Thus, $\widetilde{\Phi}$  is the lemniscatic map of $\tau(\cK)$.
\eop
\end{proof}

Lemma~\ref{lem:linear_trafo} can be applied to the lemniscatic maps 
we will derive in Sections~\ref{sect:pre-images} and~\ref{sect:map_disks} in 
order to obtain lemniscatic maps for further sets.

%% End of Section Properties 
%% Section -- polynomial preimages

\section{Lemniscatic maps and polynomial pre-images}
\label{sect:pre-images}

In this section we discuss the construction of lemniscatic maps if the set $E$ 
is a polynomial pre-image of a simply connected compact set $\Omega$. We first 
exhibit the intricate relation between the lemniscatic map for $E$ and the 
exterior Riemann map for $\Omega$ in the general case. Under some additional 
assumptions we obtain an explicit formula for the lemniscatic map in terms of 
the Riemann map; see Theorem~\ref{thm:pre-images} below.

Let $\Omega\subseteq\C$ be a compact and simply connected set (not a single 
point) and let
\begin{equation}\label{eqn:Riemann_map}
\widetilde{\Phi}:\widehat{\C}\backslash\Omega\rightarrow \{w\in\widehat{\C}: 
|w|>1\} \quad
\text{with} \quad \widetilde{\Phi}(\infty) = \infty, \quad 
\widetilde{\Phi}'(\infty) > 0,
\end{equation}
be the exterior Riemann map of $\Omega$. Suppose that
\begin{equation*}
E \coloneq P^{-1}(\Omega)
\end{equation*}
consists of $n \geq 2$ simply connected compact components (none a single 
point), 
where $P(z) = \alpha_d z^d + \alpha_{d-1} z^{d-1} + \ldots + \alpha_0$ is a 
polynomial with $\alpha_d \neq 0$. As above, let 
$\cK \coloneq \widehat{\C} \backslash E$ and let
\begin{equation*}
\Phi : \cK \to \cL = \{ w \in \widehat{\C} : \abs{U(w)} > \mu \}
\end{equation*}
be the lemniscatic map of $\cK$. Then the Green's function with pole at 
infinity for $\cK$ 
is given by~\eqref{eqn:green_cK}, and can also be expressed as
\begin{equation*}
g_\cK(z)
= \log \abs{U(\Phi(z))} - \log(\mu)
= \frac{1}{d} g_{\widehat{\C} \backslash \Omega} (P(z))
= \frac{1}{d} \log \abs{ \widetilde{\Phi}(P(z)) };
\end{equation*}
see the proof of Theorem~5.2.5 in~\cite{Ransford1995}.
This shows that $\Phi$ and $\widetilde{\Phi}$ are related by
\begin{equation}
\abs{ U(\Phi(z)) } = \mu \abs{ \widetilde{\Phi}(P(z)) }^{1/d},
\label{eqn:connect_Phi_and_Riemann}
\end{equation}
where 
\begin{equation*}
\mu = c(E) = \left( \frac{c(\Omega)}{\abs{\alpha_d}} \right)^{1/d}
= \left( \frac{1}{ \abs{\alpha_d}\widetilde{\Phi}'(\infty) } \right)^{1/d};
\end{equation*}
see~\cite[Theorem~5.2.5]{Ransford1995}.
If $\widetilde{\Phi}$ and $P$ are known, the 
equality~\eqref{eqn:connect_Phi_and_Riemann} yields a formula for (the modulus 
of) $U \circ \Phi$.  However, this does not lead to \emph{separate} 
expressions for $U$ and $\Phi$.  In other words, we can in general neither 
obtain the lemniscatic domain nor the lemniscatic map directly 
via~\eqref{eqn:connect_Phi_and_Riemann} from the knowledge of 
$\widetilde{\Phi}$ and $P$.

For certain sets $\Omega$ and polynomials $P$, we obtain by a direct 
construction explicit formulas for $U$ and $\Phi$ in terms of the Riemann 
map $\widetilde{\Phi}$.

\begin{theorem} \label{thm:pre-images}
Let $\Omega = \Omega^* \subseteq \C$ be compact and simply connected (not a 
single point) with exterior Riemann map $\widetilde{\Phi}$ as 
in~\eqref{eqn:Riemann_map}.
Let $P(z) = \alpha z^n + \alpha_0$ with $n \geq 2$,
$\alpha_0 \in \R$ to the left of $\Omega$, and $\alpha > 0$.

Then $E \coloneq P^{-1}(\Omega)$ is the disjoint union of $n$ simply 
connected compact sets, and
\begin{align}
&\Phi : \widehat{\C} \backslash E \to \cL = \{ w \in \widehat{\C} : \abs{U(w)} 
> \mu \}, \\
&\Phi(z) = z \Big( \frac{\mu^n}{z^n} [ (\widetilde{\Phi} \circ P)(z) - 
(\widetilde{\Phi} \circ P)(0) ] \Big)^{\frac{1}{n}}, \label{eqn:Phi_pre-images}
\end{align}
is the lemniscatic map of $E$, where we take the principal branch of the $n$th 
root, and where
\begin{equation}
\mu \coloneq \Big( \frac{1}{\alpha \widetilde{\Phi}'(\infty)} 
\Big)^{\frac{1}{n}} > 0, \quad \text{and} \quad
U(w) \coloneq ( w^n + \mu^n (\widetilde{\Phi} \circ P)(0) )^{\frac{1}{n}}.
\label{eqn:mu_U_polypreimage}
\end{equation}
\end{theorem}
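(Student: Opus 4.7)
My plan is to verify directly that formula~\eqref{eqn:Phi_pre-images} gives a well-defined analytic bijection $\Phi : \widehat{\C}\backslash E \to \cL$ satisfying the normalization~\eqref{eqn:normalization_Phi}, and then invoke the uniqueness part of Theorem~\ref{thm:existence_ext_mapping}. Two preliminary observations are routine. First, because the critical values of $P$, namely $\alpha_0$ and $\infty$, lie outside $\Omega$, the restriction $P : E \to \Omega$ is an unramified $n$-fold cover of a simply connected set, hence trivial; so $E$ is the disjoint union of $n$ simply connected compact sets. Second, setting $c \coloneq \widetilde{\Phi}(\alpha_0)$, which is real by the symmetry $\Omega = \Omega^*$ together with $\alpha_0\in\R$, we have $|c|>1$ and $w^n + \mu^n c$ factors into $n$ distinct linear factors over $\C$, so $U$ from~\eqref{eqn:mu_U_polypreimage} has the form~\eqref{eqn:lemniscate} with $m_k = 1/n$.

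The main construction is to study the function $G(z) \coloneq \mu^n[\widetilde{\Phi}(P(z)) - c]/z^n$ on $\widehat{\C}\backslash E$. A Taylor expansion at $z=0$ using $P(z) - \alpha_0 = \alpha z^n$, and a Laurent expansion at $z=\infty$ using the normalization $\mu^n \alpha \widetilde{\Phi}'(\infty) = 1$ built into~\eqref{eqn:mu_U_polypreimage}, show that $G$ extends analytically to all of $\widehat{\C}\backslash E$ with $G(\infty) = 1$ and $G(0) = \mu^n \alpha \widetilde{\Phi}'(\alpha_0)$, and that $G$ is nowhere zero there (since $\widetilde{\Phi}$ is injective, the numerator vanishes only at $z=0$, where it has a zero of order exactly $n$). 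The central task is to prove that $G$ admits a single-valued analytic $n$-th root on $\widehat{\C}\backslash E$, that is, that the winding of $G$ about $0$ along any loop $\gamma_j$ encircling one component $E_j$ is zero. Here I would exploit the symmetry $P(\zeta z) = P(z)$ with $\zeta \coloneq e^{2\pi i/n}$, which leaves $G$ invariant and permutes the $E_j$ cyclically, so all these windings are equal to a common integer $w$. Applying the argument principle to the numerator $\widetilde{\Phi}(P(z))-c$ on the region bounded by a large circle and the small loops $\gamma_j$ (the numerator is analytic there, has a zero of order $n$ at $z=0$, and winding $n$ on the large circle) yields $n - nw = n$, forcing $w=0$. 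Defining $\Phi(z) \coloneq z\, G(z)^{1/n}$ with the branch fixed by $G(\infty)^{1/n} = 1$, the normalization $\Phi(z) = z + \cO(1/z)$ follows immediately from $G(z) = 1 + \cO(z^{-n})$.

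It remains to show $\Phi$ is a bijection onto $\cL$. Raising the formula to the $n$-th power gives $U(\Phi(z))^n = \Phi(z)^n + \mu^n c = \mu^n \widetilde{\Phi}(P(z))$, whence $|U(\Phi(z))| = \mu|\widetilde{\Phi}(P(z))|^{1/n} > \mu$, so $\Phi(z) \in \cL$. For injectivity, $\Phi(z_1) = \Phi(z_2)$ forces $P(z_1) = P(z_2)$, so $z_2 = \zeta^k z_1$ for some $k$; the rotational symmetry $\Phi(\zeta z) = \zeta \Phi(z)$, immediate from $G(\zeta z) = G(z)$, then forces $k = 0$. For surjectivity, given $w \in \cL$, set $W \coloneq \widetilde{\Phi}^{-1}(U(w)^n/\mu^n) \in \widehat{\C}\backslash\Omega$; the $n$ preimages $\zeta^k z_0$ of $W$ under $P$ satisfy $\Phi(\zeta^k z_0) = \zeta^k \Phi(z_0)$ with $\Phi(\zeta^k z_0)^n = w^n$, so exactly one of them equals $w$. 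The uniqueness part of Theorem~\ref{thm:existence_ext_mapping} then identifies $\Phi$ with the lemniscatic map of $E$. The main obstacle is the single-valuedness of $G^{1/n}$ in the second step; without the rotational symmetry one would face an $(n-1)$-parameter family of winding data to control, but the symmetry collapses it to a single integer that can be pinned down by the argument principle.
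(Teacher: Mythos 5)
Your argument is essentially correct, but it takes a genuinely different route from the paper's. The paper constructs $\Phi$ explicitly: it restricts to the sector $S=\{-\pi/n<\arg z<\pi/n\}$ containing one component $E_1$, composes $P$, $\widetilde{\Phi}$, the affine map $z_2\mapsto \mu^n(z_2-\widetilde{\Phi}(\alpha_0))$ and the principal $n$th root (each a conformal bijection between explicitly described slit domains), and then extends to all of $\widehat{\C}\backslash E$ by Schwarz reflection across the rays $\arg z=\pm\pi/n$; only the behaviour at $0$ and $\infty$ is checked by local expansions, as you also do. You instead verify the closed formula directly: you show that $G(z)=\mu^n[\widetilde{\Phi}(P(z))-\widetilde{\Phi}(\alpha_0)]/z^n$ is analytic and zero-free on $\widehat{\C}\backslash E$, kill the monodromy of $G^{1/n}$ by combining the rotational symmetry (which makes all windings of $G$ about the components $E_j$ equal) with the argument principle applied to the numerator (which forces the common winding to vanish), and then establish bijectivity onto $\cL$ from the identity $U(\Phi(z))^n=\mu^n\widetilde{\Phi}(P(z))$ together with $\Phi(\zeta z)=\zeta\Phi(z)$, before invoking the uniqueness in Theorem~\ref{thm:existence_ext_mapping}. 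Your route avoids the sector bookkeeping and the reflection step and isolates the one genuine obstruction as a single winding number; the paper's route additionally delivers the pointwise fact that the expression under the root lies in $\C\backslash\,]-\infty,0]$ (so that ``principal branch'' in~\eqref{eqn:Phi_pre-images} is literally correct everywhere, whereas you only fix the branch at $\infty$), and it can be reversed to give the explicit inverse used in Corollary~\ref{cor:radial_slits}. One small repair: your covering-space justification for the structure of $E$ tacitly assumes $\Omega$ is locally path-connected, which a compact set that is ``simply connected'' only in the sense of having connected complement need not be. The hypothesis that $]-\infty,\alpha_0]$ misses $\Omega$ lets you argue instead, as the paper does, that $\tfrac{1}{\alpha}(\Omega-\alpha_0)\subseteq\C\backslash\,]-\infty,0]$, so its $n$ branches of the $n$th root are defined and land in disjoint open sectors; this also gives the disjointness and simple connectedness of the components directly. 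Neither point affects the validity of your main argument.
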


\begin{proof}
We construct the lemniscatic map $\Phi$ first in the sector
\begin{equation*}
S = \Big\{ z \in \C \backslash \{ 0 \} : - \frac{\pi}{n} < \arg(z) < 
\frac{\pi}{n} \Big\},
\end{equation*}
and then extend it by the Schwarz reflection principle.

Since $z \in E$ if and only if $z^n \in \frac{1}{\alpha} (\Omega-\alpha_0)$, 
the set $E$ has a single component $E_1$ in the sector $S$, obtained by taking 
the principal branch of the $n$th root.  Note that $E_1 = E_1^* \subseteq S$ is 
again a simply connected compact set.  Then $E = \cup_{j=1}^n e^{i 2 \pi j/n} 
E_1$ is the disjoint union of $n$ simply connected compact sets.

Starting in $S \backslash E_1$, we construct the lemniscatic map as a 
composition of bijective conformal maps, see Figure~\ref{fig:preimage}:
\begin{enumerate}
\item The function $z_1 = P(z)$ maps $S \backslash E_1$ onto the complement of 
$]-\infty, \alpha_0] \cup \Omega$.

\item Then $z_2 = \widetilde{\Phi}(z_1)$ maps this domain onto 
the complement of $]-\infty, \widetilde{\Phi}(\alpha_0)] \cup \{ z_2 : 
\abs{z_2} \leq 1 \}$.
Note that $\Omega = \Omega^*$ implies that $\widetilde{\Phi}(z) = \overline{
\widetilde{\Phi}(\overline{z})}$, so that $]-\infty, \alpha_0]$ is mapped to 
the real line.
In particular, $\widetilde{\Phi}(\alpha_0) < -1$.

\item The function $z_3 = \mu^n ( z_2 - \widetilde{\Phi}(\alpha_0) )$ maps the 
previous domain onto the complement of $]-\infty, 0] \cup \{ z_3 \in \C : 
\abs{z_3 + \mu^n \widetilde{\Phi}(\alpha_0)} \leq \mu^n \}$.  Here $\mu > 0$ is 
defined by~\eqref{eqn:mu_U_polypreimage}.

\item Finally, $w = z_3^{1/n}$, where we take the principal branch of 
the square root, maps this domain onto $S \cap \{ w : \abs{w^n + 
\mu^n \widetilde{\Phi}(\alpha_0) } > \mu^n \}$.
\end{enumerate}

%% Figure -- preimage
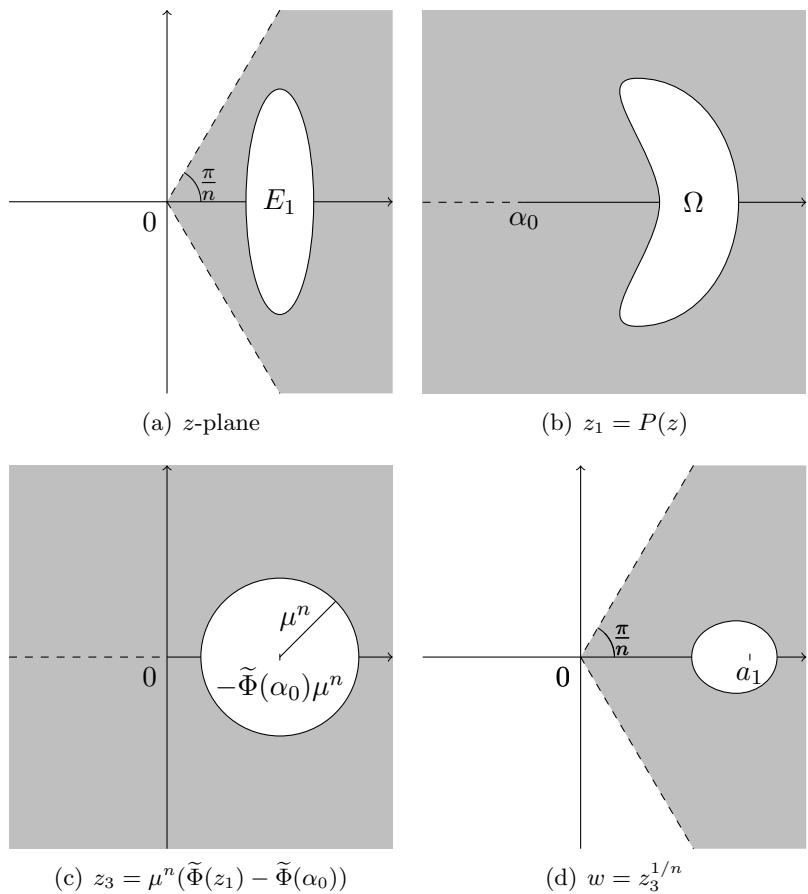
\begin{figure}%[t!]
\begin{center}
\subfigure[$z$-plane]{
\begin{tikzpicture}[scale = 1.5]
% plot sector
\path [fill=lightgray] (0,0) -- (1,1.7) -- (2,1.7) -- (2,-1.7) -- (1,-1.7) ;
% plot sector boundary
\draw [dashed] (1,1.7) -- (0,0) -- (1,-1.7) ;
% coordinate system
\draw [->] (-1.4,0) -- (2,0) ;
\draw [->] ((0,-1.7) -- (0,1.7) ;
% annotation
\draw (0.3,0) arc [radius = 0.3, start angle=0, end angle = 60] ;
\node [] at (22.5:0.4) {$\frac{\pi}{n}$} ;
\node [below left] at (0,0) {$0$} ;
% plot E_1
\draw[fill=white] (1,0) ellipse (0.3cm and 1cm) ;
\node [] at (1,0) {$E_1$} ;
\end{tikzpicture}
}
\subfigure[$z_1 = P(z)$]{
\begin{tikzpicture}[scale = 1.5]
% plot domain
\fill[lightgray] (-1.4,-1.7) rectangle (2, 1.7) ;
\draw [dashed] (-1.4,0) -- (-0.5,0) ;
% coordinate system
\draw [->] (-0.5,0) -- (2,0) ;
% \draw [->] ((0,-1.7) -- (0,1.7) ;
% annotation
\node [below] at (-0.5,0) {$\alpha_0$} ;
% plot K
\draw [fill = white] (1.4,0) to [out = 90, in = 0] (0.5, 1.1)
to [out = 180, in = 90] (0.7, 0) to [out = 270, in = 180] (0.5, -1.1)
to [out = 0, in = 270] (1.4,0) ;
\node [] at (1,0) {$\Omega$} ;
\end{tikzpicture}
}

% second row
\subfigure[$z_3 = \mu^n ( \widetilde{\Phi}(z_1) - \widetilde{\Phi}(\alpha_0))$]{
\begin{tikzpicture}[scale = 1.5]
% plot domain
\fill[lightgray] (-1.4,-1.7) rectangle (2, 1.7) ;
\draw [dashed] (-1.4,0) -- (0,0) ;
% coordinate system
\draw [->] (0,0) -- (2,0) ;
\draw [->] ((0,-1.7) -- (0,1.7) ;
% annotation
\node [below left] at (0,0) {$0$} ;
% plot disk
\draw [fill = white] (1,0) circle [radius = 0.7];
\draw (1,-0.03) -- (1,0.03) ;
\node[below] at (1,0) {$-\widetilde{\Phi}(\alpha_0) \mu^n$} ;
\draw (1,0) -- (1.4950,0.4950) ;
\node[above] at (1.14,0.14) {$\mu^n$} ;
\end{tikzpicture}
}
\subfigure[$w = z_3^{1/n}$]{
\begin{tikzpicture}[scale = 1.5]
% plot sector
\path [fill=lightgray] (0,0) -- (1,1.7) -- (2,1.7) -- (2,-1.7) -- (1,-1.7) ;
% plot sector boundary
\draw [dashed] (1,1.7) -- (0,0) -- (1,-1.7) ;
% coordinate system
\draw [->] (-1.4,0) -- (2,0) ;
\draw [->] ((0,-1.7) -- (0,1.7) ;
% annotation
\draw (0.3,0) arc [radius = 0.3, start angle=0, end angle = 60] ;
\node [] at (22.5:0.4) {$\frac{\pi}{n}$} ;
\node [below left] at (0,0) {$0$} ;
% plot lemniscate
\draw [fill = white] (1.7409,0) to [out = 90, in = 0] (1.38, 0.3215)
to [out = 180, in = 90] (0.9843, 0) to [out = 270, in = 180] (1.38, -0.3215)
to [out = 0, in = 270] (1.7409,0);
\draw (1.5,-0.03) -- (1.5,0.03) ;
\node[below] at (1.5,0) {$a_1$} ;
% annotation
\draw (0.3,0) arc [radius = 0.3, start angle=0, end angle = 45] ;
\node [] at (22.5:0.4) {$\frac{\pi}{n}$} ;
\node [below left] at (0,0) {$0$} ;
\end{tikzpicture}
}
\end{center}
\caption{Construction of the lemniscatic map in the proof of 
Theorem~\ref{thm:pre-images}.}
\label{fig:preimage}
\end{figure}

Since each map is bijective and conformal, their composition $\Phi$ 
is a bijective conformal map from $S \backslash E_1$ to $S \cap \{ 
w \in \widehat{\C} : \abs{U(w)} > \mu \}$.  A short computation shows that
\begin{equation*}
\Phi(z) = ( \mu^n [ (\widetilde{\Phi} \circ P)(z) - (\widetilde{\Phi} 
\circ P)(0) ] )^{\frac{1}{n}} 
= z \Big( \frac{\mu^n}{z^n} [ (\widetilde{\Phi} \circ P)(z) - 
(\widetilde{\Phi} \circ P)(0) ] \Big)^{\frac{1}{n}}
\end{equation*}
for $z \in S \backslash E_1$.
Since $\Phi$ maps the half-lines $\arg(z) = \pm \frac{\pi}{n}$ onto themselves, 
$\Phi$ can be extended by Schwarz' reflection principle to a bijective and 
conformal map from $(\widehat{\C} \backslash E) \backslash \{ 0, \infty \}$ 
to $\cL \backslash \{ 0, \infty \}$.  Note that $\Phi$ is given 
by~\eqref{eqn:Phi_pre-images} for every $z \in (\widehat{\C} \backslash E) 
\backslash \{ 0, \infty \}$, since the right hand side 
of~\eqref{eqn:Phi_pre-images} is analytic there.  This follows since the 
expression under the $n$th root lies in $\C \backslash ]-\infty, 0]$ for every 
$z \in (\widehat{\C} \backslash E) \backslash \{ 0, \infty \}$.

It remains to show that $\Phi$ is defined and conformal in $0$ and $\infty$, 
and that it satisfies the normalization in~\eqref{eqn:normalization_Phi}.
We begin with the point $z = 0$.  Near $\alpha_0$ the Riemann mapping 
$\widetilde{\Phi}$ has the form
\begin{equation*}
\widetilde{\Phi}(z) = \widetilde{\Phi}(\alpha_0) + \widetilde{\Phi}'(\alpha_0) 
(z-\alpha_0) + \cO( (z-\alpha_0)^2 ).
\end{equation*}
Then near $0$ we have 
\begin{equation*}
\Phi(z)
= z \Big( \frac{\mu^n}{z^n} [ \widetilde{\Phi}'(\alpha_0) \alpha z^n + 
\cO(z^{2n}) ] \Big)^{\frac{1}{n}}
= z \Big( \mu^n \widetilde{\Phi}'(\alpha_0) \alpha + \cO(z^n) 
\Big)^{\frac{1}{n}},
\end{equation*}
so that $\Phi(0) = 0$, and $\Phi'(0) = ( \mu^n \widetilde{\Phi}'(\alpha_0) 
\alpha )^{\frac{1}{n}} \neq 0$, showing that $\Phi$ is defined and conformal at 
$0$.

Near $z = \infty$, we have $\widetilde{\Phi}(z) = \widetilde{\Phi}'(\infty) z + 
\cO(1)$, so that, together with~\eqref{eqn:mu_U_polypreimage},
\begin{equation*}
\Phi(z) = z \Big( \frac{\mu^n}{z^n} [ \widetilde{\Phi}'(\infty) \alpha z^n + 
\cO(1) ] \Big)^{\frac{1}{n}}
= z \Big( 1 + \cO \Big( \frac{1}{z^n} \Big) \Big)^{\frac{1}{n}}
= z + \cO \Big( \frac{1}{z^{n-1}} \Big).
\end{equation*}
Thus $\Phi$ satisfies~\eqref{eqn:normalization_Phi} and is a bijective 
conformal map from $\widehat{\C} \backslash E$ to $\cL$, as claimed.
\eop
\end{proof}

The assumption $\alpha > 0$ in Theorem~\ref{thm:pre-images} has been made
for simplicity only. In the notation of the theorem, if 
$P_\theta(z) = \alpha e^{i \theta} z^n + \alpha_0$ with $\theta \in \R$, then 
$P_\theta(z) = P(e^{i \theta/n} z)$. Hence 
$P_\theta^{-1}(\Omega) = e^{- i \theta/n} E = \tau(E)$, with $\tau(z) = 
e^{-i \theta/n} z$.  Then the lemniscatic map of $\widehat{\C} \backslash 
P_\theta^{-1}(\Omega)$ is $\tau \circ \Phi \circ \tau^{-1}$; see 
Lemma~\ref{lem:linear_trafo}.

Also note that if $\Omega$ is symmetric with respect to the line through the 
origin and some point $e^{i \theta}$, then, taking $\alpha_0$ on that line to 
the left of $\Omega$, the assertion of Theorem~\ref{thm:pre-images} remains 
unchanged.

\begin{example}{\rm
As an example we consider the compact set $\Omega$ in 
Figure~\ref{fig:bw_preimage_Omega}, which is of the form introduced 
in~\cite[Theorem~3.1]{KochLiesen2000}.  It is defined with the parameters 
$\lambda = -1$, $\phi = \frac{\pi}{2}$ and $R = 1.1$ through the inverse of 
its Riemann map
\begin{equation*}
\widetilde{\Phi}^{-1}(w) = \frac{ (w-\lambda N)(w-\lambda M)}{(N-M) w + \lambda 
(MN-1)},
\end{equation*}
where
\begin{equation*}
Q = \tan(\phi/4) + \frac{1}{\cos(\phi/4)}, \quad
N = \frac{1}{2} \left( \frac{Q}{R} + \frac{R}{Q} \right), \quad
M = \frac{ R^2 - 1}{2 R \tan(\phi/4)}.
\end{equation*}
Then $\Omega$ is the compact set bounded by $\widetilde{\Phi}^{-1} ( \{ w \in 
\C : \abs{w} = 1 \} )$, and thus has, in particular, an analytic boundary. 
Since $\Omega=\Omega^*$ and $\alpha_0 = 0$ lies to the left of $\Omega$, we can 
apply Theorem~\ref{thm:pre-images} with $P(z) = z^3$.  
Figure~\ref{fig:bw_preimage_E} shows the set $E = P^{-1}(\Omega)$, and 
Figure~\ref{fig:bw_preimage_L} shows the corresponding lemniscatic domain.
}\end{example}

\begin{figure}
\begin{center}
\subfigure[$E = P^{-1}(\Omega)$ with $P(z) = z^3$]{
\label{fig:bw_preimage_E}
\includegraphics[width=0.48\textwidth]{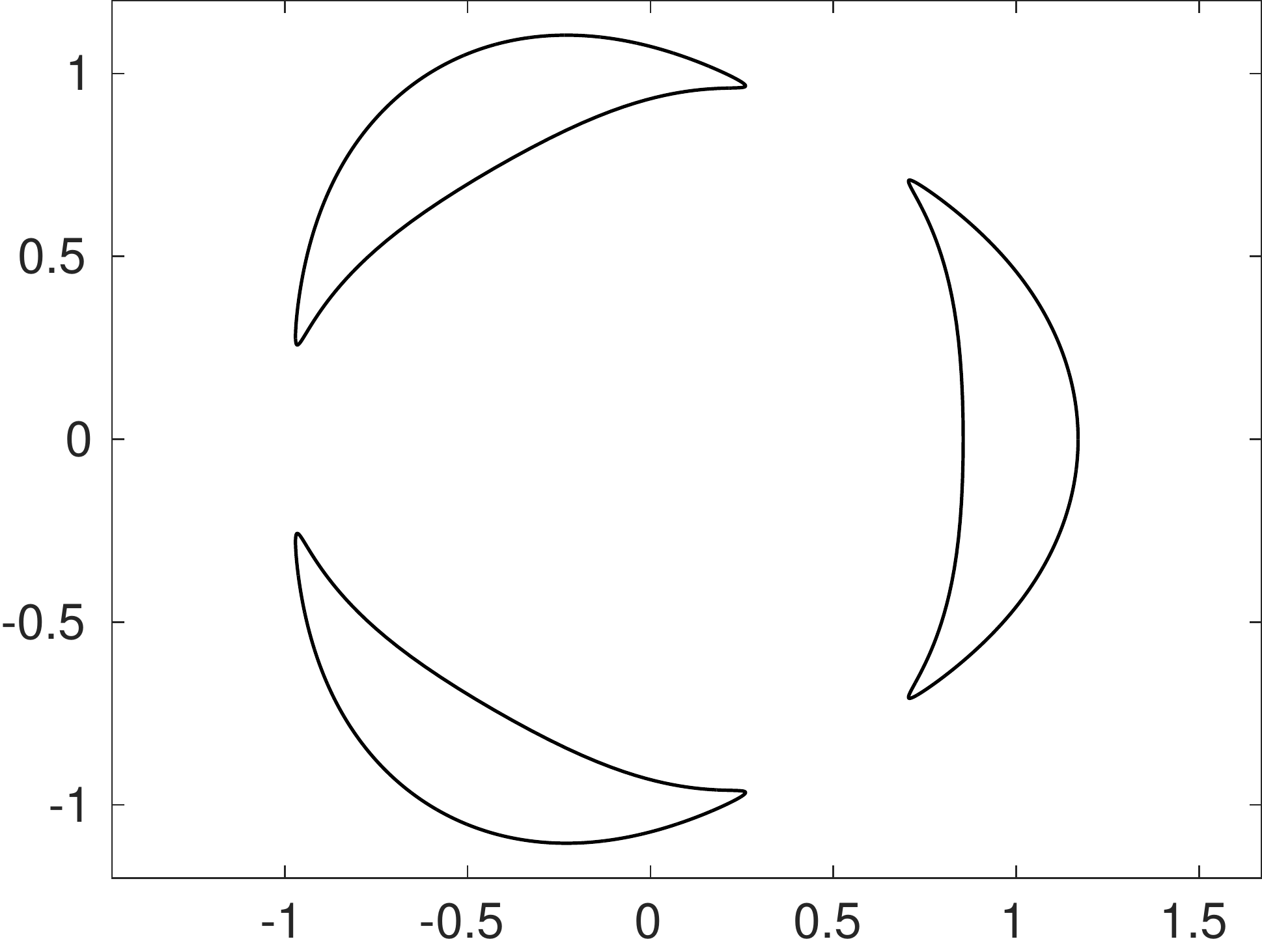}}
\subfigure[Compact set $\Omega = \Omega^*$.]{
\label{fig:bw_preimage_Omega}
\includegraphics[width=0.48\textwidth]{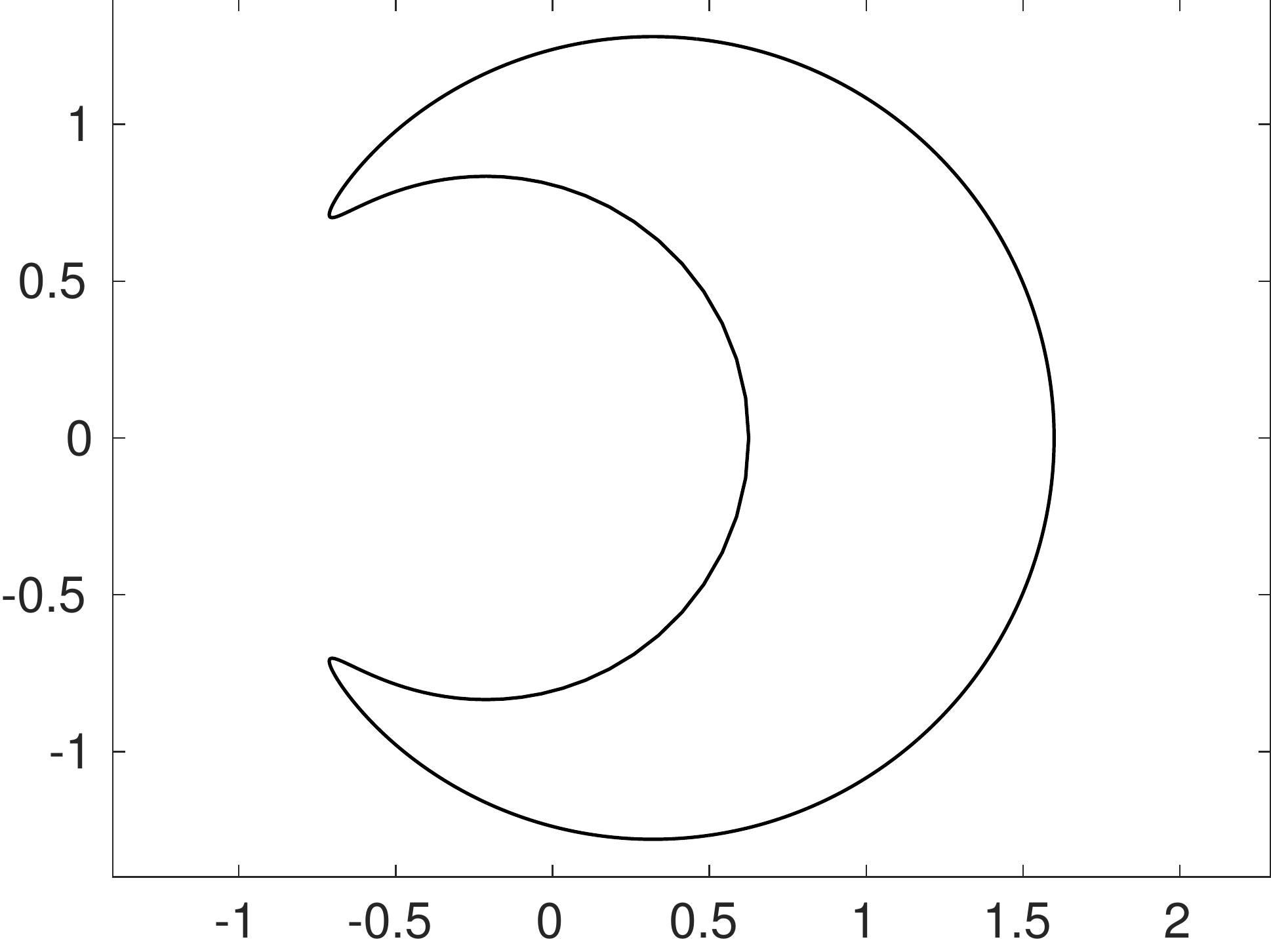}}
\subfigure[Lemniscatic domain $\cL$]{
\label{fig:bw_preimage_L}
\includegraphics[width=0.48\textwidth]{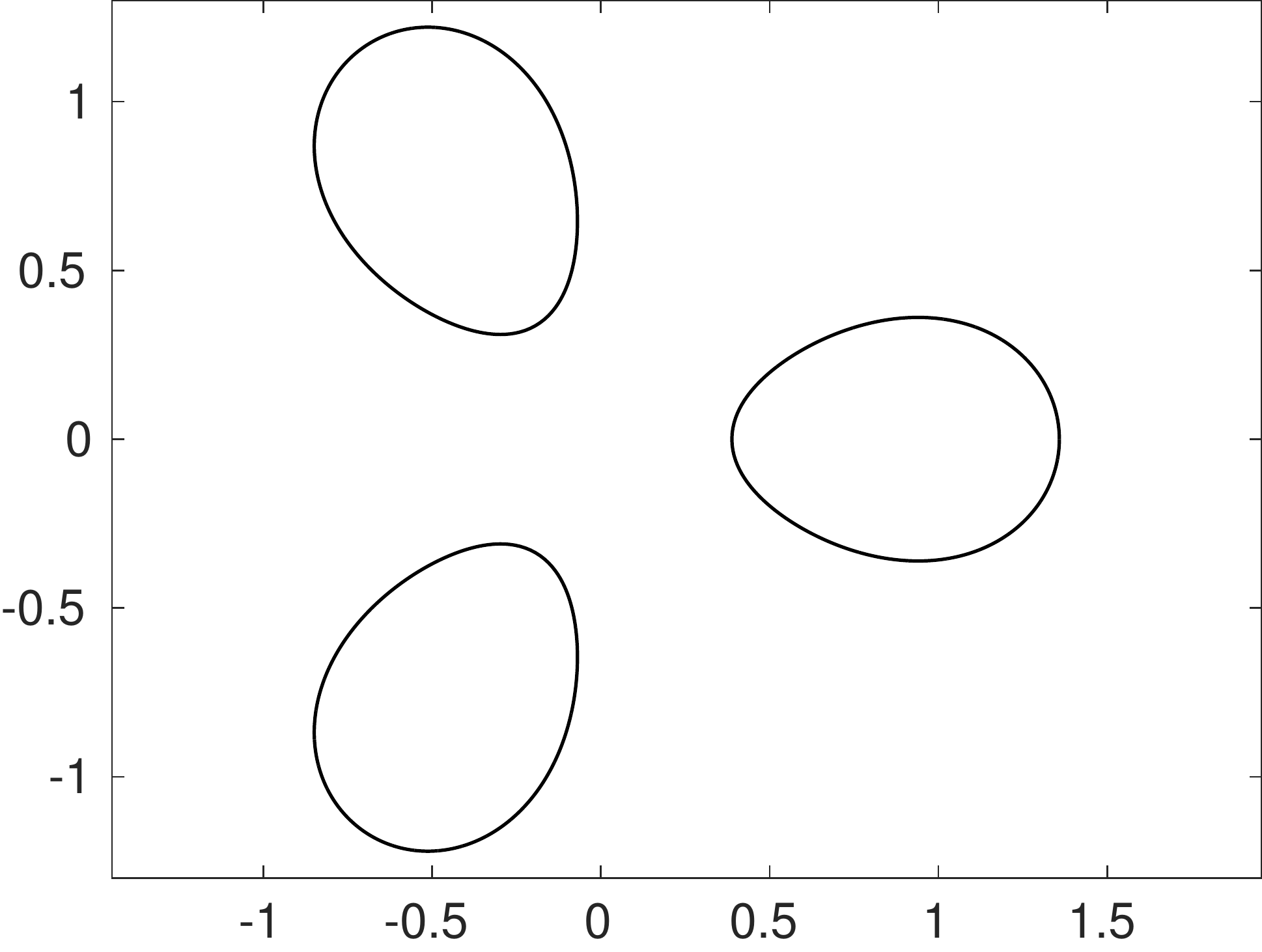}}
\end{center}
\caption{Illustration of Theorem~\ref{thm:pre-images} with a set
from~\cite[Theorem~3.1]{KochLiesen2000}.}
% \label{fig:bw_preimage}
\end{figure}

Using Theorem~\ref{thm:pre-images} we now derive the lemniscatic
conformal map for a radial slit domain.

\begin{corollary} \label{cor:radial_slits}
Let $E = \cup_{j=1}^n e^{i 2 \pi j / n} [C, D]$ with $0 < C < D$.  Then
\begin{equation}
\Phi(z) = z \Big( \frac{1}{2} + \frac{\sqrt{D}^n \sqrt{C}^n}{2} \frac{1}{z^n}
\pm \frac{1}{2 z^n} \sqrt{(z^n-C^n)(z^n-D^n)} \Big)^{\frac{1}{n}}
\label{eqn:Phi_radial_slits}
\end{equation}
is the lemniscatic map of $E$ with corresponding lemniscatic domain
\begin{equation}
\cL = \left\{ w \in \widehat{\C} : \abs{U(w)} = 
\left\lvert w^n - \frac{ (\sqrt{D}^n+\sqrt{C}^n)^2 }{4} 
\right\rvert^{\frac{1}{n}} > \mu = \Big( \frac{D^n-C^n}{4} \Big)^{\frac{1}{n}} 
\right\}.
\label{eqn:cL_radial_slits}
\end{equation}
The inverse of $\Phi$ is given by
\begin{equation}
\Phi^{-1}(w) = w \left( 1 + \frac{ (\sqrt{D}^n-\sqrt{C}^n)^2 }{4} \frac{1}{w^n 
- \frac{ (\sqrt{D}^n+\sqrt{C}^n)^2 }{4}} \right)^{\frac{1}{n}},
\label{eqn:psi_radial_slits}
\end{equation}
where we take the principal branch of the $n$th root.
\end{corollary}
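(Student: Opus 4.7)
The approach is to recognize $E$ as a polynomial pre-image of an interval and then invoke Theorem~\ref{thm:pre-images} directly. Specifically, set $P(z) = z^n$ (so $\alpha = 1$, $\alpha_0 = 0$) and $\Omega = [C^n, D^n]$. The hypotheses of Theorem~\ref{thm:pre-images} are then immediate: $\Omega = \Omega^* \subset \R$, $\alpha_0 = 0$ lies to the left of $\Omega$ since $0 < C^n$, $\alpha = 1 > 0$, and taking $n$th roots sector-by-sector gives $P^{-1}(\Omega) = \bigcup_{j=1}^n e^{2\pi i j/n}[C,D] = E$ as a disjoint union of $n$ simply connected compact sets.

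The central computation is the exterior Riemann map of the interval $[C^n, D^n]$. Via the Joukowski map, its inverse is
$$\widetilde{\Phi}^{-1}(w) = \frac{D^n-C^n}{4}\Bigl(w + \frac{1}{w}\Bigr) + \frac{C^n+D^n}{2},$$
so $\widetilde{\Phi}'(\infty) = 4/(D^n-C^n)$, and inversion yields
$$\widetilde{\Phi}(z) = \frac{2z-(C^n+D^n)}{D^n-C^n} + \frac{2\sqrt{(z-C^n)(z-D^n)}}{D^n-C^n},$$
with the branch chosen so that $\widetilde{\Phi}(z) \sim 4z/(D^n-C^n)$ at infinity. The formula for $\mu$ in~\eqref{eqn:mu_U_polypreimage} then gives $\mu = ((D^n-C^n)/4)^{1/n}$, matching the claim. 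Solving the quadratic $\widetilde{\Phi}^{-1}(w) = 0$ and selecting the root with $|w| > 1$ produces $\widetilde{\Phi}(0) = -(\sqrt{D}^n + \sqrt{C}^n)^2/(D^n-C^n)$. Inserting this into~\eqref{eqn:mu_U_polypreimage} yields $\mu^n \widetilde{\Phi}(P(0)) = -(\sqrt{D}^n+\sqrt{C}^n)^2/4$, which gives precisely the $U$ appearing in~\eqref{eqn:cL_radial_slits}.

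Substituting the above into~\eqref{eqn:Phi_pre-images} yields~\eqref{eqn:Phi_radial_slits} after a short simplification. The key algebraic identity is
$$-(C^n+D^n) + (\sqrt{C}^n+\sqrt{D}^n)^2 = 2\sqrt{C^n D^n} = 2\sqrt{C}^n\sqrt{D}^n,$$
which collapses the constant terms in $\mu^n[(\widetilde{\Phi}\circ P)(z) - \widetilde{\Phi}(0)]$ and leaves $\tfrac12 + \tfrac{\sqrt{D}^n\sqrt{C}^n}{2z^n} + \tfrac{\sqrt{(z^n-C^n)(z^n-D^n)}}{2z^n}$ inside the outer $n$th root; the $\pm$ in~\eqref{eqn:Phi_radial_slits} records the two branches of this inner square root. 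For the inverse~\eqref{eqn:psi_radial_slits}, one starts from $w^n = \mu^n[\widetilde{\Phi}(z^n) - \widetilde{\Phi}(0)]$, rearranges to $\widetilde{\Phi}(z^n) = w^n/\mu^n + \widetilde{\Phi}(0)$, and applies $\widetilde{\Phi}^{-1}$ to obtain $z^n$ as a rational function of $w^n$. The identity $(\sqrt{D}^n+\sqrt{C}^n)(\sqrt{D}^n-\sqrt{C}^n) = D^n - C^n$ then puts the expression into the form stated.

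The main obstacle is bookkeeping of branches rather than any conceptual difficulty: one must verify that the correct root of the quadratic is taken (so that $|\widetilde{\Phi}(0)| > 1$ and $\widetilde{\Phi}(0) < -1$, consistent with the proof of Theorem~\ref{thm:pre-images}), track the branch of $\sqrt{(z^n-C^n)(z^n-D^n)}$ through the sector construction, and confirm that the principal $n$th root prescribed in Theorem~\ref{thm:pre-images} is compatible with the closed form in~\eqref{eqn:Phi_radial_slits}. Once these are settled, the remaining work is direct algebraic simplification.
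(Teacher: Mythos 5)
Your proposal is correct and follows essentially the same route as the paper: apply Theorem~\ref{thm:pre-images} with $P(z)=z^n$ and $\Omega=[C^n,D^n]$, compute the exterior Riemann map of the interval via the Joukowski transformation (giving $\widetilde{\Phi}'(\infty)=4/(D^n-C^n)$ and $\widetilde{\Phi}(0)=-(\sqrt{D}^n+\sqrt{C}^n)^2/(D^n-C^n)$), and obtain the inverse by reversing the construction through $\widetilde{\Phi}^{-1}(w^n/\mu^n+\widetilde{\Phi}(0))$. All intermediate computations, including the choice of root with $|\widetilde{\Phi}(0)|>1$ and the simplification of the constant terms, check out.
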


\begin{proof}
With $P(z) = z^n$ and $\Omega = [C^n, D^n]$ we have $E = P^{-1}(\Omega)$, and
Theorem~\ref{thm:pre-images} applies.
We need the conformal map
\begin{equation*}
\widetilde{\Phi} : \widehat{\C} \backslash [C^n,D^n] \to \{ w \in \widehat{\C} 
: \abs{w} > 1 \}, \quad \widetilde{\Phi}(\infty) = \infty, \quad 
\widetilde{\Phi}'(\infty) > 0.
\end{equation*}
Clearly, its inverse is given by
\begin{equation*}
\widetilde{\Phi}^{-1}(w) = \frac{D^n-C^n}{4} \Big( w+\frac{1}{w} \Big) + 
\frac{D^n+C^n}{2},
\end{equation*}
so that
\begin{equation*}
\widetilde{\Phi}(z) = \frac{2}{D^n-C^n} \Big( z - \frac{D^n+C^n}{2} \pm 
\sqrt{(z-C^n)(z-D^n)} \Big),
\end{equation*}
where the branch of the square root is chosen such that 
$\abs{\widetilde{\Phi}(z)} > 1$.
In particular, $\widetilde{\Phi}'(\infty) = \frac{4}{D^n-C^n}$.
Using this in Theorem~\ref{thm:pre-images} 
yields~\eqref{eqn:Phi_radial_slits} and~\eqref{eqn:cL_radial_slits}.
By reversing the construction in the proof of 
Theorem~\ref{thm:pre-images}, we see that
\begin{equation*}
\Phi^{-1}(w) = \Big( \widetilde{\Phi}^{-1} \Big( \frac{w^n}{\mu^n} + 
\widetilde{\Phi}(0) \Big) \Big)^{\frac{1}{n}},
\end{equation*}
which, after a short computation, yields~\eqref{eqn:psi_radial_slits}.
\eop
\end{proof}

Corollary~\ref{cor:radial_slits} shows, in particular, that 
$\left( \frac{D^n-C^n}{4} \right)^{1/n}$ is the logarithmic 
capacity of $E = \cup_{j=1}^n e^{i 2 \pi j/n} [C, D]$; see~\cite{Hasson2003}.

Let us have a closer look at Corollary~\ref{cor:radial_slits} in the case $n = 
2$, i.e.,
\begin{equation*}
E = [-D,-C] \cup [C,D].
\end{equation*}
First note that in this case Corollary~\ref{cor:radial_slits} gives a new proof 
for the well-known fact that the logarithmic capacity of $E$ is $c(E) = 
\tfrac{\sqrt{D^2-C^2}}{2}$; see, e.g.,~\cite[Corollary~5.2.6]{Ransford1995} 
and~\cite{Hasson2003}.
Figure~\ref{fig:psi2ints} shows phase portraits of $\Phi$ and
$\psi \coloneq \Phi^{-1}$ 
for the values $C = 0.1$ and $D = 1$; see~\cite{Wegert2012,WegertSemmler2011} 
for details on phase portraits.  The black lines in the left figure are the two 
intervals forming $E$, and the black curves in the right figure are the 
boundary of $\cL$.  At the black and white dots the functions have the values 
$0$ and $\infty$, respectively. The zeros of $\psi$ are $0$ and $\pm 
\sqrt{DC}$.
The function $\psi : \cL \to \cK$ can be continued analytically (but not 
conformally) to a full neighbourhood of the lemniscate $\{ w : \abs{w^2 - 
\frac{(D+C)^2}{4}} = \frac{D^2-C^2}{4} \}$. The zeros of $\psi'$ are denoted by 
black crosses. Note the discontinuity of the phase of $\psi$ between the zeros and 
singularities interior to the lemniscate. This suggests that $\psi$ will be 
analytic and single-valued in $\{ w \in \widehat{\C} : \abs{U(w)} > 
\tfrac{D-C}{2} \}$.

\begin{figure}%[t!]
\subfigure[Phase portrait of $\Phi$.]{
\includegraphics[width=0.48\textwidth]{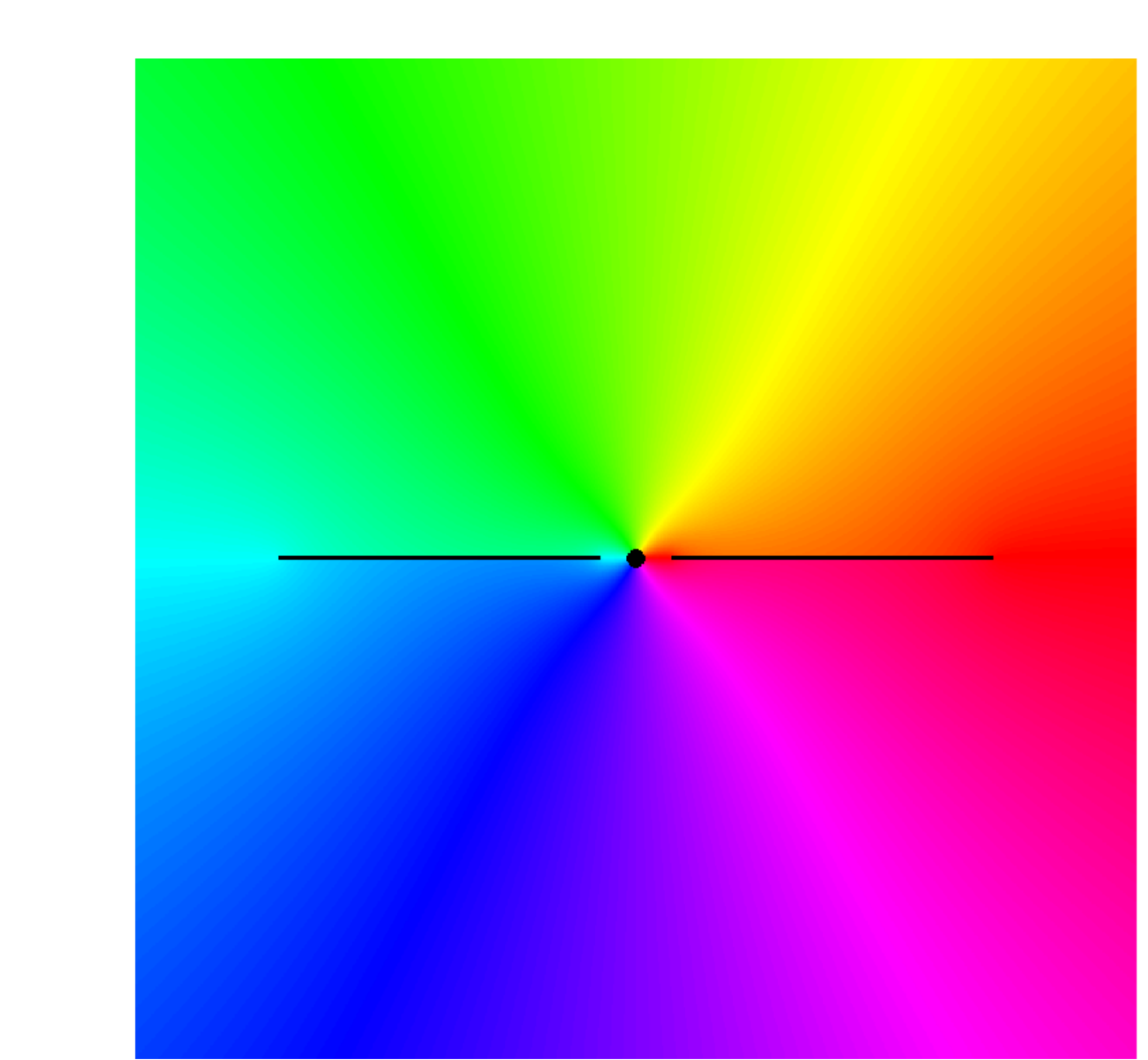}}
\subfigure[Phase portrait of $\Phi^{-1}$.]{
\includegraphics[width=0.48\textwidth]{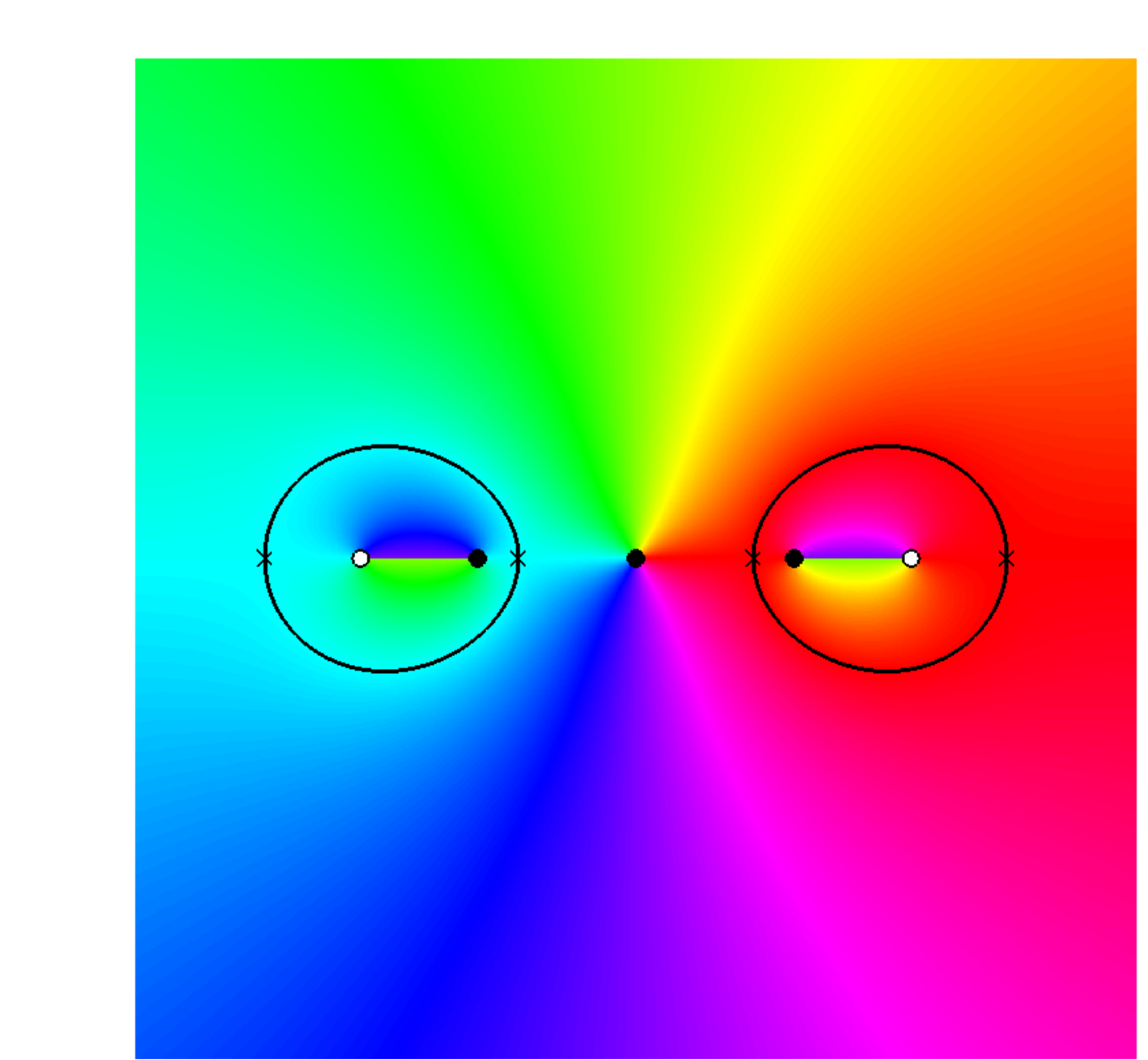}}
\caption{Phase portraits of $\Phi$ and $\Phi^{-1}$ from 
Corollary~\ref{cor:radial_slits} for $n = 2$ and $C = 0.1$ and $D = 1$.}
\label{fig:psi2ints}
\end{figure}

%% End of section
%% Section -- Lemniscatic map for two equal disks

\section{Lemniscatic map for two equal disks}
\label{sect:map_disks}

In this section we analytically construct the lemniscatic map of a set $E$ that 
is the union of two disjoint equal disks.
Let us denote by $D_r(z_0) = \{ z \in \C : \abs{z-z_0} \leq r \}$ the closed 
disk with radius $r > 0$ and center $z_0 \in \C$.
By Lemma~\ref{lem:linear_trafo} we can assume without loss of generality that 
$E = D_r(z_0) \cup D_r(-z_0)$ with real $z_0$ and $0 < r < z_0$.
Let $P(z) = \alpha z^2 + \alpha_0$ with $\alpha > 0$, then
\begin{equation*}
\Omega = \{ \alpha (z_0 + \rho e^{it})^2 + \alpha_0 : 0 \leq \rho \leq r,
0 \leq t \leq 2 \pi \}
\end{equation*}
is a simply connected compact set with $E = P^{-1}(\Omega)$,
so that in principle we could apply Theorem~\ref{thm:pre-images}. 
However, the Riemann map for the set $\Omega$ seems not to be readily 
available.  Therefore, we directly construct the lemniscatic map as a 
composition of certain conformal maps.  The main ingredients are the map from 
the exterior of two disks onto the exterior of two intervals and from there 
onto a lemniscatic domain (according to Corollary~\ref{cor:radial_slits}).

We need the following conformal map from~\cite[pp.~293--295]{Nehari1952}.

\begin{lemma} \label{lem:annulus_to_slit_plane}
Let $0 < \rho < 1$ and define
\begin{equation}
L = L(\rho) \coloneq 2 \rho \prod_{n=1}^\infty \Big( \frac{1 + \rho^{8n}}{1 +
\rho^{8n-4}} \Big)^2, \label{eqn:formula_L}
\end{equation}
and the complete elliptic integral of the first kind
\begin{equation}
K = K(k) \coloneq \int_0^1 \frac{dt}{ \sqrt{(1 - t^2) (1 - k^2 t^2)} } \quad 
\text{with } k \coloneq L^2.
\label{eqn:K}
\end{equation}
Then the function
\begin{equation}
w = f(z) = L \sn \Big(\frac{2K}{\pi} i \log \Big(\frac{z}{\rho}\Big) + K; k\Big)
\label{eqn:annu_to_slit_disc}
\end{equation}
is a bijective and conformal map from the annulus $\rho < \abs{z} < \rho^{-1}$ 
onto the $w$-plane with the slits $-\infty < w \leq -\tfrac{1}{L}$, $-L \leq w 
\leq L$ and $\tfrac{1}{L} \leq w < \infty$.
Further, we have $f(-1) = -1$ and $f'(-1) = (1-L^2) \tfrac{2K}{\pi} > 0$, and
$f(z^{-1}) = (f(z))^{-1}$.
\end{lemma}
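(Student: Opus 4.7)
The plan is to realize $f$ as a composition of two classical conformal maps. First, $\phi(z) \coloneq \frac{2K}{\pi} i \log(z/\rho) + K$ sends the annulus $\{\rho < |z| < \rho^{-1}\}$, cut along the negative real axis, biholomorphically onto the open rectangle $R \coloneq (-K, 3K) \times (0, K')$ in the $u$-plane. Second, $u \mapsto L \sn(u;k)$ maps $R$ onto the triply slit plane. Since $\sn(\cdot;k)$ has real period $4K$, equal to the width of $R$, the two vertical sides of $R$ (corresponding to the two sides of the branch cut) are identified consistently, so $f = L \sn \circ \phi$ descends to a single-valued function on the full annulus.

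The crucial sizing condition is that the height of $R$, equal to $-\tfrac{4K \log \rho}{\pi}$, must match the half-period $K'$ of $\sn$. By the definition of the nome $q = e^{-\pi K'/K}$, this forces $q = \rho^4$, which determines $k$ uniquely. The classical theta-function expansion
\begin{equation*}
k = \frac{\theta_2(0,q)^2}{\theta_3(0,q)^2} = 4 \sqrt{q} \prod_{n=1}^\infty \left( \frac{1 + q^{2n}}{1 + q^{2n-1}} \right)^4,
\end{equation*}
evaluated at $q = \rho^4$ and followed by a positive square root, gives exactly~\eqref{eqn:formula_L} with $L = \sqrt{k}$.

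Next I would identify the images of the boundary circles. On the bottom edge $\{\im(u) = 0\}$, corresponding to $|z| = \rho$, $\sn$ is real with values in $[-1, 1]$, so $L \sn$ traces the middle slit $[-L, L]$. On the top edge $\{\im(u) = K'\}$, corresponding to $|z| = \rho^{-1}$, Jacobi's imaginary-period identity $\sn(u + iK'; k) = 1/(k \sn(u; k))$ shows that $L \sn$ covers $(-\infty, -1/L] \cup [1/L, \infty)$. The vertical edges both map to the segment $[-1/L, -L]$ connecting the middle and left outer slits, which becomes interior to the image after the vertical identification. Since $\sn$ has degree $2$ on any fundamental parallelogram and $R$ is a half-parallelogram, $\sn$ is bijective from $R^\circ$ onto the complement of the three slits, and so is $f$ from the annulus.

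Finally, for the normalizations, $\phi(-1) = -K + iK'/2$, and combining $\sn(K + iK'/2; k) = 1/\sqrt{k}$ with $\sn(-\overline{u}; k) = -\overline{\sn(u; k)}$ yields $f(-1) = L \cdot (-1/\sqrt{k}) = -1$. Differentiating and using the identities $\cn(K + iK'/2; k) = -ik'/\sqrt{k(1+k)}$, $\dn(K + iK'/2; k) = k'/\sqrt{1+k}$, and $\phi'(-1) = -2iK/\pi$ produces $f'(-1) = (1-k)\tfrac{2K}{\pi} = (1-L^2)\tfrac{2K}{\pi} > 0$. The inversion symmetry $f(z^{-1}) = 1/f(z)$ follows from $\phi(z^{-1}) = 2K + iK' - \phi(z)$ together with $\sn(u + 2K; k) = -\sn(u; k)$ and the imaginary transformation, which give $\sn(\phi(z^{-1}); k) = 1/(k \sn(\phi(z); k))$; the prefactor $L$ then makes $f(z^{-1}) = 1/f(z)$ since $k = L^2$. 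The main subtlety will be the branch-identification bookkeeping to ensure that $\sn$ is genuinely one-to-one from $R^\circ$ onto the slit plane rather than a 2-to-1 covering.
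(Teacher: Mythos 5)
Your proposal is correct, and for the core mapping statement it is genuinely more self-contained than the paper's proof: the paper simply cites Nehari (pp.~293--295) for the existence and mapping properties of $f$, including the provenance of the product formula for $L$, and devotes its entire proof to the three supplementary claims. You instead rederive the mapping property by unrolling the cut annulus onto the rectangle $(-K,3K)\times(0,K')$, matching its height to $K'$ via the nome relation $q=\rho^4$, and reading off \eqref{eqn:formula_L} from the theta-quotient expansion of $\sqrt{k}$ --- this is essentially Nehari's own construction, so you are reproving the cited result rather than replacing it, but it makes the lemma self-contained and explains where $L(\rho)$ comes from. Two points your sketch leans on and should spell out: (i) injectivity of $\sn$ on the open rectangle needs the observation that the two preimages of a generic value are swapped by $u\mapsto 2K-u$, which exchanges the upper and lower half-parallelograms modulo the periods, so exactly one lies in $R^{\circ}$ (you flag this yourself as the main subtlety, and it does go through); and (ii) the consistency between ``$k$ defined as $L(\rho)^2$'' (as in the statement) and ``$k$ determined by the nome $q=\rho^4$'' rests on the bijectivity of $k\mapsto q(k)$ on $(0,1)$ together with the product expansion --- standard, but it is the hinge of the whole argument. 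For the supplementary claims your computations agree with the paper's in substance: the same special values at $K+i\tfrac{K'}{2}$ (your $-ik'/\sqrt{k(1+k)}$ and $k'/\sqrt{1+k}$ reduce to the paper's $-i\sqrt{1/k-1}$ and $\sqrt{1-k}$), with the passage to $-K+i\tfrac{K'}{2}$ handled by conjugation symmetry rather than the paper's $2K$-translation identities; and your derivation of $f(z^{-1})=f(z)^{-1}$ from $\phi(z^{-1})=2K+iK'-\phi(z)$ and $\sn(u+iK';k)=1/(k\sn(u;k))$ is a clean direct substitute for the paper's argument via the reflection symmetries $f(\overline{z})=\overline{f(z)}$ and $f(1/\overline{z})=1/\overline{f(z)}$.
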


\begin{proof}
See~\cite[pp.~293--295]{Nehari1952} for the existence and mapping properties of
$f$.  Note that $f$ is independent of the choice of the branch of the 
logarithm.  By construction $f$ is also symmetric with respect to the real axis 
and to the unit circle, i.e., $f(z) = \overline{ f(\overline{z}) }$, and
$f(z) = 1 / \overline{ f( 1/\overline{z} ) }$.
This implies $f(1/z) = 1 / \overline{ f(\overline{z}) } = 1/f(z)$.

It remains to compute $f(-1)$ and $f'(-1)$.  Recall the identity
\begin{equation*}
\sn'(z; k) = \cn(z; k) \dn(z; k),
\end{equation*}
where $\cn(z; k) = \sqrt{1 - \sn(z; k)^2}$, 
$\cn(0) = 1$, 
and $\dn(z) = \sqrt{1 - k^2 \sn(z; k)^2}$, $\dn(0) = 1$.  We compute
\begin{equation*}
f'(z) = L \cn(\zeta(z); k) \dn(\zeta(z); k) \frac{2K}{\pi} i \frac{1}{z}, \quad 
\text{where} \quad
\zeta(z) = \frac{2 K}{\pi} i \log \Big( - i \frac{z}{\rho} \Big).
\end{equation*}
For $z = -1$ we have $\zeta(-1) = -K - i \frac{2K}{\pi} \log(\rho) \eqcolon -K 
+ i \frac{K'}{2}$; see~\cite[p.~294]{Nehari1952}.  With the special values
\begin{equation*}
\begin{split}
\sn(K + i \tfrac{K'}{2}; k) &= \frac{1}{\sqrt{k}}, \quad
\cn(K + i \tfrac{K'}{2}; k) = -i \sqrt{\frac{1}{k}-1}, \\
\dn(K + i \tfrac{K'}{2}; k) &= \sqrt{1-k},
\end{split}
\end{equation*}
see \cite[p.~381]{MagnusOberhettingerSoni1966} or \cite[p.~145]{Tricomi1948},
and the identities
\begin{equation*}
\begin{split}
\sn(z+2K; k) &= - \sn(z; k), \quad
\cn(z+2K; k) = - \cn(z; k), \\
\dn(z+2K; k) &= \dn(z; k),
\end{split}
\end{equation*}
see \cite[p.~500]{WhittakerWatson1962}, we obtain
\begin{equation*}
\begin{split}
f(-1) &= L \sn \Big( - K + i \frac{K'}{2}; k \Big) = - L \frac{1}{\sqrt{k}}
= -1, \\
f'(-1) &= L i \sqrt{\frac{1-k}{k}} \sqrt{1-k} \frac{2K}{\pi} i (-1)
= L \frac{1-k}{\sqrt{k}} \frac{2K}{\pi} = (1-L^2) \frac{2K}{\pi} > 0.
\end{split}
\end{equation*}
In the last equalities we used $k = L^2$.
\eop
\end{proof}

We now construct the lemniscatic map of the exterior of two disjoint equal 
disks.

\begin{theorem} \label{thm:ext_map_2_disks}
Let $r, z_0 \in \R$ with $0 < r < z_0$, and $E = D_r(z_0) \cup D_r(-z_0)$.
Let $T$ be the M\"{o}bius transformation
\begin{equation*}
T(z) = \frac{\alpha+z}{\alpha-z}, \quad \alpha = \sqrt{z_0^2 - r^2} > 0, 
% \label{eqn:Moebius_alpha}
\end{equation*}
$f$, $K$, $L$ be given as in Lemma~\ref{lem:annulus_to_slit_plane} with
\begin{equation*}
0 < \rho = \frac{\sqrt{z_0+r} - \sqrt{z_0-r}}{\sqrt{z_0+r} + \sqrt{z_0-r}} < 1,
\end{equation*}
and let $\Phi_1$ be the lemniscatic map from~\eqref{eqn:Phi_radial_slits} for
$n = 2$, with
\begin{equation}
C = \frac{2 K \alpha}{\pi} (1-L)^2, \quad
D = \frac{2 K \alpha}{\pi} (1+L)^2.
\label{eqn:CD_twodisks}
\end{equation}

Then
\begin{equation}
\Phi(z) = \Phi_1 ( f'(-1) \cdot (T^{-1} \circ f \circ T)(z) )
\label{eqn:Phi_disks}
\end{equation}
is the lemniscatic map of $E$ with corresponding lemniscatic domain
\begin{equation}
\cL = \left\{ w \in \widehat{\C} :
\Big\lvert w^2 - \Big( \frac{2K\alpha}{\pi} (1+L^2) \Big)^2 
\Big\rvert^\frac{1}{2}
> \sqrt{2 L (1+L^2)} \frac{2 K \alpha}{\pi} \right\},
\label{eqn:cL_two_disks}
\end{equation}
and hence, in particular, $c(E) = \sqrt{2 L (1+L^2)} \frac{2 K \alpha}{\pi}$.
\end{theorem}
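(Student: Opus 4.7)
The plan is to express $\Phi$ as a composition of explicit conformal maps and reduce to the lemniscatic map of Corollary~\ref{cor:radial_slits}. Concretely, I will build a bijective conformal map $G$ from $\widehat{\C}\setminus E$ onto the complement of two symmetric real intervals $[-D,-C]\cup[C,D]$ with normalization $G(z)=z+\cO(1/z)$ near infinity, and then compose with the lemniscatic map $\Phi_1$ from Corollary~\ref{cor:radial_slits}. Uniqueness in Theorem~\ref{thm:existence_ext_mapping} will then identify $\Phi_1\circ G$ as the lemniscatic map of $E$.

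For the construction, I would first verify that $T$, whose fixed points $\pm\alpha$ are the limit points of the coaxial pencil of circles containing $\partial D_r(\pm z_0)$, maps $\widehat{\C}\setminus E$ bijectively and conformally onto the annulus $\{\rho<|w|<1/\rho\}$; the value of $\rho$ in the theorem emerges from evaluating $T$ at the four real boundary points $\pm z_0\pm r$. Composing with $f$ from Lemma~\ref{lem:annulus_to_slit_plane} then gives a conformal map onto the three-slit plane $\widehat{\C}\setminus((-\infty,-1/L]\cup[-L,L]\cup[1/L,\infty))$. Applying $T^{-1}$ pushes these three real slits back to the real axis: since $T^{-1}(-1)=\infty$ and $T^{-1}$ is monotone on each of $(-\infty,-1)$ and $(-1,\infty)$, the outer slits $1$ and $3$ paste together across $\infty$ into a single interval $[\alpha(1-L)/(1+L),\alpha(1+L)/(1-L)]$, while slit $2$ maps to its mirror image about $0$. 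The algebraic identity $(1-L^2)(1\mp L)/(1\pm L)=(1\mp L)^2$ then shows that scaling by $f'(-1)=(1-L^2)(2K/\pi)$ produces exactly $[-D,-C]\cup[C,D]$ with $C,D$ as in~\eqref{eqn:CD_twodisks}.

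The main obstacle is verifying the normalization $G(z)=z+\cO(1/z)$ near infinity for $G:=f'(-1)\cdot(T^{-1}\circ f\circ T)$. Chaining the local expansions $T(z)=-1-2\alpha/z+\cO(1/z^2)$ (from expanding $T$ at $\infty$) and $f(w)=-1+f'(-1)(w+1)+\cO((w+1)^2)$ (from expanding $f$ at $-1$) with the expansion of $T^{-1}$ near $-1$ gives $T^{-1}(f(T(z)))=z/f'(-1)+b+\cO(1/z)$ for some real constant $b$, so the leading coefficient of $G$ is $1$. The vanishing of the constant term $b$ is the delicate point; I would deduce it from the odd symmetry $F(-z)=-F(z)$ for $F:=T^{-1}\circ f\circ T$, which follows by combining $T(-z)=1/T(z)$ (a direct computation) with $f(z^{-1})=f(z)^{-1}$ from Lemma~\ref{lem:annulus_to_slit_plane}.

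With the normalization in hand, $G$ is a bijective conformal map from $\widehat{\C}\setminus E$ onto $\widehat{\C}\setminus([-D,-C]\cup[C,D])$ satisfying $G(z)=z+\cO(1/z)$. Hence $\Phi=\Phi_1\circ G$ is a bijective conformal map from $\widehat{\C}\setminus E$ onto the lemniscatic domain of Corollary~\ref{cor:radial_slits} with the same asymptotics at $\infty$, and uniqueness in Theorem~\ref{thm:existence_ext_mapping} identifies $\Phi$ as the lemniscatic map of $E$. Substituting the values of $C$ and $D$ from~\eqref{eqn:CD_twodisks} into~\eqref{eqn:cL_radial_slits} then yields~\eqref{eqn:cL_two_disks}, and reading off $\mu$ gives $c(E)=\sqrt{2L(1+L^2)}\,(2K\alpha/\pi)$.
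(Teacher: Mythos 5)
Your proposal is correct and follows essentially the same route as the paper: the chain $T \to f \to T^{-1} \to$ scaling by $f'(-1) \to \Phi_1$, with the normalization at infinity obtained from the local expansions at $\infty$ and $-1$ and the constant term killed by the odd symmetry coming from $T(-z)=1/T(z)$, $f(1/z)=1/f(z)$ and $T^{-1}(1/w)=-T^{-1}(w)$. The only cosmetic difference is that you establish oddness of the intermediate map $G$ before composing with $\Phi_1$, whereas the paper argues oddness of the full composition using that $\Phi_1$ is itself odd; both yield the same conclusion.
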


\begin{proof}
Our proof is constructive.  First, $\Phi$ is obtained as composition of 
conformal maps which map $\widehat{\C} \backslash E$ to a lemniscatic domain.  
In a second step, we show that $\Phi$ is normalized as 
in~\eqref{eqn:normalization_Phi}, and thus is a lemniscatic map.
The first steps in the construction, namely $T^{-1} \circ f \circ T$, modify 
and generalize a conformal map in~\cite[p.~297]{Nehari1952}, and are 
illustrated in Figure~\ref{fig:map_two_disks}.

\begin{figure}%[t!]
\begin{center}
\subfigure[exterior of two disks in the $z$-plane]{
\begin{tikzpicture}[scale = 1.25]
\path[use as bounding box] (-2.2,-2.2) rectangle (2.2,2.2) ;
% plot domain
\fill[gray] (-2.2,-1.2) rectangle (0,1.2) ;
\fill[lightgray] (0,-1.2) rectangle (2.2,1.2) ;
% the two disks
\draw [fill=white] (-1,0) circle [radius = 0.5] ;
\draw [fill=white] (1,0) circle [radius = 0.5] ;
% coordinate system
\draw (-2.2,0) -- (2.2,0) ;
\draw (0,-1.2) -- (0,1.2) ;
% annotation
\draw (-1,-0.03) -- (-1,0.03) ;
\node[below] at (-1,0) {$-z_0$} ;
\draw (-1,0) -- (-0.6464, 0.3536) ;
\node[left] at (-0.82, 0.18) {$r$} ;
\draw (1,-0.03) -- (1,0.03) ;
\node[below] at (1,0) {$z_0$} ;
\draw (1,0) -- (1.3536, 0.3536) ;
\node[left] at (1.18, 0.18) {$r$} ;
\end{tikzpicture}
}
\hfill
\subfigure[$z_1 = T(z) = \tfrac{\alpha+z}{\alpha-z}$]{
\begin{tikzpicture}[scale = 1.25]
% plot domain
\draw[fill=lightgray] (0,0) circle [radius = 2] ;
\draw[dashed, fill=gray] (0,0) circle [radius = 1.25] ;
\draw[fill=white] (0,0) circle [radius = 0.5] ;
% coordinate system
\draw [->] (-2.2,0) -- (2.2,0) ;
\draw [->] (0,-2.2) -- (0,2.2) ;
% annotation
\draw (-2,-0.03) -- (-2,0.03) ;
\node[below right] at (-2,0) {$-\tfrac{1}{\rho}$} ;
\draw (-1.25,-0.03) -- (-1.25,0.03) ;
\node[below right] at (-1.25,0) {$-1$} ;
\draw (-0.5,-0.03) -- (-0.5,0.03) ;
\node[below right] at (-0.5,0) {$-\rho$} ;
\draw (0.5,-0.03) -- (0.5,0.03) ;
\node[below left] at (0.5,0) {$\rho$} ;
\draw (1.25,-0.03) -- (1.25,0.03) ;
\node[below left] at (1.25,0) {$1$} ;
\draw (2,-0.03) -- (2,0.03) ;
\node[below left] at (2,0) {$\tfrac{1}{\rho}$} ;
\end{tikzpicture}
}

%%% Second row
\subfigure[$z_2 = f(z_1)$]{
\begin{tikzpicture}[scale = 1.25]
% plot domain
\fill[lightgray] (-2.2,-1.2) rectangle (2.2,1.2) ;
\draw[dashed, fill=gray] (0,0) circle [radius = 1] ;
% coordinate system
\draw (-2.2,0) -- (2.2,0) ;
% slits
\draw [ultra thick] (-2.2,0) -- (-1.5,0) ;
\draw [ultra thick] (-0.5,0) -- (0.5,0) ;
\draw [ultra thick] (1.5,0) -- (2.2,0) ;
% annotation
\draw (-1.5,-0.03) -- (-1.5,0.03) ;
\node[below] at (-1.5,0) {$-\tfrac{1}{L}$} ;
\draw (-1,-0.03) -- (-1,0.03) ;
\node[below] at (-1,0) {$-1$} ;
\draw (-0.5,-0.03) -- (-0.5,0.03) ;
\node[below] at (-0.5,0) {$-L$} ;
\draw (0.5,-0.03) -- (0.5,0.03) ;
\node[below] at (0.5,0) {$L$} ;
\draw (1,-0.03) -- (1,0.03) ;
\node[below] at (1,0) {$1$} ;
\draw (1.5,-0.03) -- (1.5,0.03) ;
\node[below] at (1.5,0) {$\tfrac{1}{L}$} ;
\end{tikzpicture}
}
\hfill
\subfigure[$z_3 = T^{-1}(z_2) = \alpha \tfrac{z_2-1}{z_2+1}$]{
\begin{tikzpicture}[scale = 1.25]
% plot domain
\fill[gray] (-2.2,-1.2) rectangle (0,1.2) ;
\fill[lightgray] (0,-1.2) rectangle (2.2,1.2) ;
% the two slits
\draw [ultra thick] (-1.5,0) -- (-0.5,0) ;
\draw [ultra thick] (0.5,0) -- (1.5,0) ;
% coordinate system
\draw (-2.2,0) -- (2.2,0) ;
\draw (0,-1.2) -- (0,1.2) ;
% annotation
\draw (-1.5,-0.03) -- (-1.5,0.03) ;
\node[below] at (-1.5,0) {$-\tfrac{\alpha}{b}$} ;
\draw (-0.5,-0.03) -- (-0.5,0.03) ;
\node[below] at (-0.5,0) {$- \alpha b$} ;
\draw (0.5,-0.03) -- (0.5,0.03) ;
\node[below] at (0.5,0) {$\alpha b$} ;
\draw (1.5,-0.03) -- (1.5,0.03) ;
\node[below] at (1.5,0) {$\tfrac{\alpha}{b}$} ;
\end{tikzpicture}
}

\end{center}
\caption{Conformal map from the exterior of two disks to the exterior of two 
intervals; see the proof of Theorem~\ref{thm:ext_map_2_disks}.}
\label{fig:map_two_disks}
\end{figure}
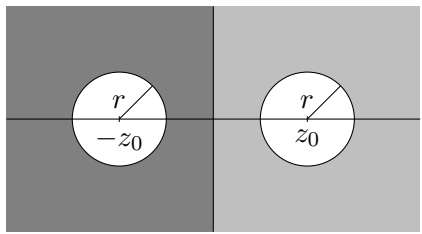
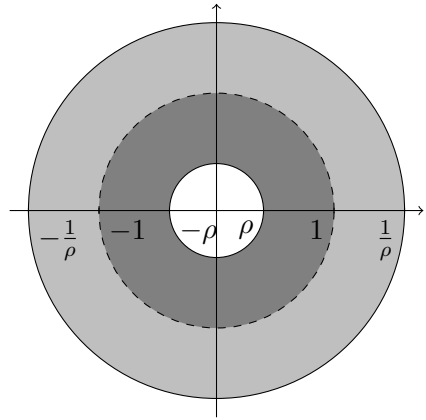
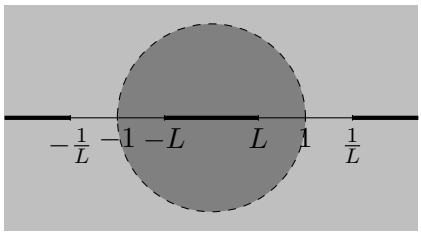
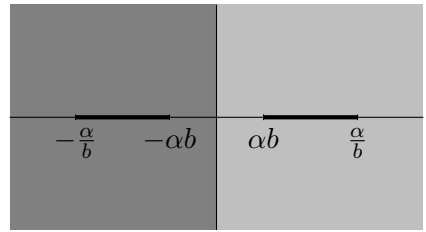

Since $T$ maps the points $-\alpha, 0, \alpha$ to $0, 1, \infty$, respectively, 
$T$ maps $\R$ to $\R$ (with same orientation).
We compute the images of the two disks under $z_1 = T(z)$.
Let
\begin{equation*}
\rho \coloneq T(-z_0+r)
= \frac{ \sqrt{z_0+r} - \sqrt{z_0-r} }{\sqrt{z_0+r} + \sqrt{z_0-r}} \in\; ]0,1[.
\end{equation*}
A short computation shows that $T(-z_0-r) = -\rho$.
Since the circle $\abs{z+z_0} = r$ cuts the real line in a right angle, this 
holds true for its image under $T$, and $T$ maps the circle 
$\abs{z+z_0} = r$ onto the circle $\abs{z_1} = \rho$.
Further, $T(-z) = 1/T(z)$ implies that $T$ maps $\abs{z-z_0} = r$ to 
$\abs{z_1} = \frac{1}{\rho}$.  Hence we see that $T$ maps $\widehat{\C} 
\backslash E$ onto the annulus $\frac{1}{\rho} < \abs{z_1} < \rho$.

This annulus is mapped by $z_2 = f(z_1)$ onto the complex plane with the slits 
$-\infty \leq z_2 \leq -\tfrac{1}{L}$, $-L \leq z_2 \leq L$ and $\tfrac{1}{L} 
\leq z_2 \leq \infty$, where $L = L(\rho)$ is given by~\eqref{eqn:formula_L}; 
see Lemma~\ref{lem:annulus_to_slit_plane}.

For $T^{-1}(z_2) = \alpha \frac{z_2-1}{z_2+1}$ we have $T^{-1}(1/z_2) = - 
T^{-1}(z_2)$.  Then, setting for brevity $b = \tfrac{1-L}{1+L}$, we compute
\begin{equation*}
T^{-1}(L^{-1}) = \alpha b, \;
T^{-1}(-L^{-1}) = \alpha b^{-1}, \;
T^{-1}(-L) = - \alpha b^{-1}, \;
T^{-1}(L) = - \alpha b.
\end{equation*}
This shows that $T^{-1}$ maps the $z_2$-plane with the slits $-\infty \leq z_2 
\leq -L^{-1}$, $-L \leq z_2 \leq L$ and $L^{-1} \leq z_2 \leq \infty$ onto the 
$z_3$-plane with the two slits $[- \alpha b^{-1}, -\alpha b]$ and 
$[\alpha b, \alpha b^{-1}]$.
Multiplying with $f'(-1)$ we obtain the exterior of $[-D, -C] \cup [C, D]$, 
with $C$ and $D$ as in~\eqref{eqn:CD_twodisks}.
The lemniscatic map for this set is $\Phi_1$ from~\eqref{eqn:Phi_radial_slits}
with lemniscatic domain $\cL$ given by~\eqref{eqn:cL_radial_slits}.
A short calculation shows that $\cL$ has the form~\eqref{eqn:cL_two_disks}.

This shows that $\Phi : \widehat{\C} \backslash E \to \cL$ is a bijective and 
conformal map onto a lemniscatic domain, and it remains to 
verify~\eqref{eqn:normalization_Phi}.

We have $\Phi(\infty) = \infty$, since $T(\infty) = -1$ and $f(-1) = -1$, see 
Lemma~\ref{lem:annulus_to_slit_plane}, and since $\Phi_1$ 
satisfies the normalization in~\eqref{eqn:normalization_Phi}.
Next we show that $\Phi'(\infty) = 1$.  Let us begin with the derivative of
$g = T^{-1} \circ f \circ T$ at $z \neq \infty$, which is
\begin{equation*}
g'(z) = (T^{-1})'(f(T(z))) \cdot f'(T(z)) \cdot T'(z).
\end{equation*}
We compute $T'(z) = \frac{2 \alpha}{(\alpha-z)^2}$ and $(T^{-1})'(z) =
\frac{2 \alpha}{(z+1)^2}$, so that
\begin{equation*}
\begin{split}
(T^{-1})'(f(T(z))) \cdot T'(z)
&= (f(T(z)) - f(-1))^{-2} \frac{4 \alpha^2}{(\alpha-z)^2} \\
&= \Big( \frac{f(T(z)) - f(-1)}{T(z)-(-1)} \Big)^{-2}
\frac{4 \alpha^2}{ (T(z)+1)^2 (\alpha-z)^2} \\
&= \Big( \frac{f(T(z)) - f(-1)}{T(z)-(-1)} \Big)^{-2}.
\end{split}
\end{equation*}
We therefore find
\begin{equation*}
g'(\infty) = \lim_{z \to \infty} g'(z) 
= \lim_{z \to \infty} f'(T(z)) \Big( \frac{f(T(z)) - f(-1)}{T(z)-(-1)} 
\Big)^{-2} = \frac{1}{f'(-1)}.
\end{equation*}
This implies $\Phi'(\infty) = \Phi_1'(\infty) f'(-1) g'(\infty) = 1$, so that
$\Phi(z) = z + \cO(1)$ near infinity.
We further show that $\Phi$ is odd, so that the constant term in the Laurent 
series at infinity vanishes, showing~\eqref{eqn:normalization_Phi}.
The function $f$ satisfies $f(1/z) = 1/f(z)$; see 
Lemma~\ref{lem:annulus_to_slit_plane}.  Together with $T(-z) = 
1/T(z)$ and $T^{-1}(1/w) = - T^{-1}(w)$ this gives
\begin{equation*}
g(-z) = T^{-1}( f( T(-z) ) ) = T^{-1}( f(1/T(z)) ) = T^{-1} ( 1/f(T(z)) )
= - g(z).
\end{equation*}
Since also $\Phi_1$ is odd, which can be seen either from 
Lemma~\ref{lem:E_symm} or directly from~\eqref{eqn:Phi_radial_slits}, 
$\Phi$ is an odd function and is normalized as
in~\eqref{eqn:normalization_Phi}.
\eop
\end{proof}

Note that the construction in the proof of Theorem~\ref{thm:ext_map_2_disks} 
can be generalized to doubly connected domains $\cK = \widehat{\C} \backslash 
E$ as follows.
Let $h : \cK \to \{ w \in \C : 1/\rho < \abs{w} < \rho \}$ be a bijective 
conformal map that satisfies $\abs{h(\infty)} = 1$.  In this case we can assume 
(after rotation) that $h(\infty) = -1$.  We then have
\begin{equation*}
h(z) = -1 + a_1 / z + \cO(1/z^2) \quad \text{for } z \text{ near infinity},
\end{equation*}
with $a_1 \neq 0$ since $h$ is conformal.
Let $S(z) = \frac{z-1}{z+1}$.  Then the lemniscatic map of $E$ is given by
\begin{equation*}
\Phi(z) = \Phi_1 \left( - \frac{a_1 f'(-1)}{2} (S \circ f \circ h)(z) \right) + 
\beta
\end{equation*}
with $f$ as in Lemma~\ref{lem:annulus_to_slit_plane}, $\Phi_1$ the lemniscatic 
map of two (possibly rotated) intervals of same length, and $\beta \in \C$ is 
chosen so that the normalization~\eqref{eqn:normalization_Phi} holds.

In Figure~\ref{fig:disks} we plot the sets $E = E(z_0, r)$ for $z_0 = 1$ and $r 
= 0.5$, $0.7$ and $0.9$ (left) and the corresponding lemniscatic domains 
(right).  We evaluated the complete elliptic integral of the first 
kind~\eqref{eqn:K} using the MATLAB function \verb|ellipk|.  The product in the 
formula~\eqref{eqn:formula_L} for $L$ converges very quickly, so that it 
suffices to compute the first few terms in order to obtain the correct value 
up to machine precision.

\begin{figure}
\centerline{
\includegraphics[width=0.5\textwidth]{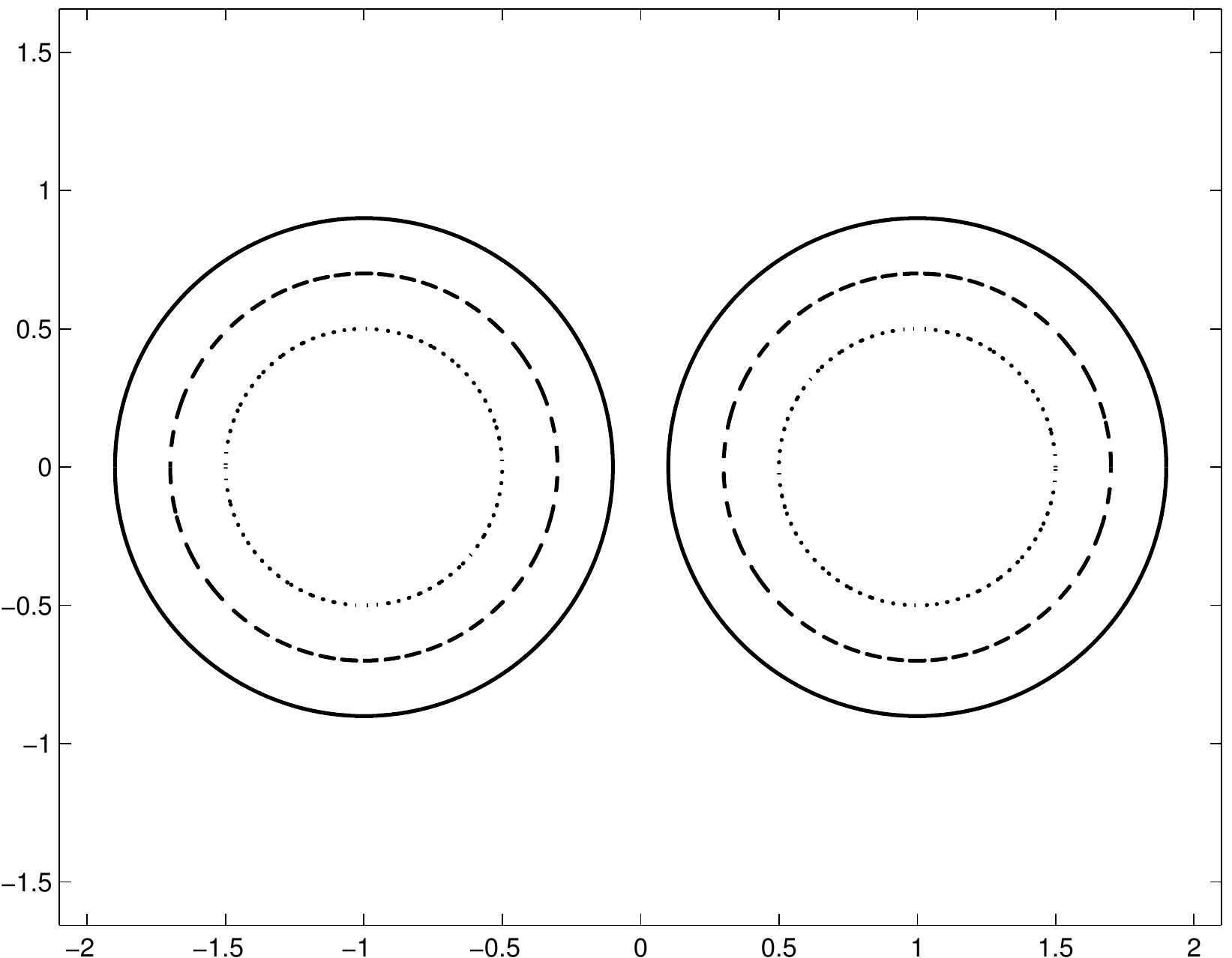}
\includegraphics[width=0.5\textwidth]{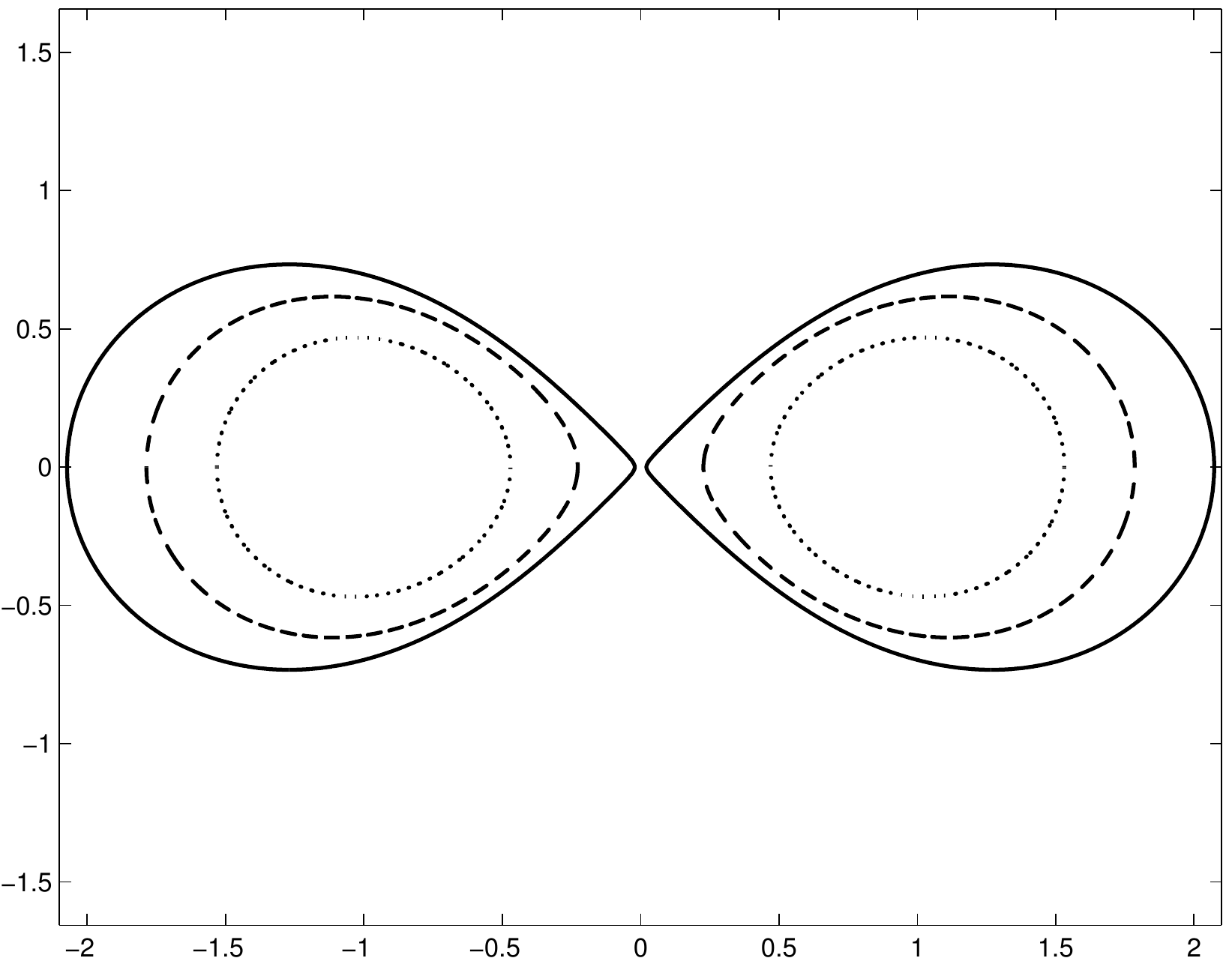}
}
\caption{Illustration of Theorem~\ref{thm:ext_map_2_disks}.}
\label{fig:disks}
\end{figure}

%% End of section
%% Section -- Concluding remarks

\section{Concluding remarks}
\label{sect:concl}

In this article we investigated properties of lemniscatic maps, i.e., 
conformal maps from multiply connected domains in the extended complex plane
onto lemniscatic domains. We derived a general construction principle 
of lemniscatic maps in terms of the Riemann map for certain polynomial 
pre-images of simply connected sets, and we constructed the first (to our 
knowledge) analytic examples: One for the exterior of $n$ radial slits, 
and one for the exterior of two disks.

Lemniscatic maps allow the construction of the Faber--Walsh polynomials, 
which are a direct generalization of the classical Faber polynomials 
to compact sets consisting of several components. A study of these polynomials 
is given in our paper~\cite{SetLie15}.  Moreover, 
we have addressed the numerical computation of lemniscatic maps 
in~\cite{NasSetLie15}.

%% The End

\textbf{Acknowledgements.}
We thank Bernd Beckermann for many helpful remarks, and in particular 
for suggesting to generalize the construction of the conformal map for 
the radial slit domain in Corollary~\ref{cor:radial_slits} to 
Theorem~\ref{thm:pre-images}.
We also thank the anonymous referees for helpful comments.

\bibliographystyle{siam}
\bibliography{conformal}

\end{document}